\newif\ifRR\RRfalse
\newtheoremstyle{theoremes}{\medskipamount}{\medskipamount}{\itshape}
                        {0pt}{\bfseries\sffamily}{.}{ }{}
\newtheoremstyle{remarques}{\medskipamount}{\medskipamount}{}
                        {0pt}{\bfseries\sffamily}{.}{ }{}
\theoremstyle{theoremes}
\newtheorem{theo}{Theorem}[section]
\newtheorem{cor}[theo]{Corollary}
\newtheorem{conj}[theo]{Conjecture}
\newtheorem{lem}[theo]{Lemma}
\theoremstyle{remarques}
\newtheorem{rem}{Remark}
\newcommand{\Rset}[1]{\mbox{$\mathbb{R}^{#1}$}}
\newcommand{\F}{\mbox{$\mathbb{F}$}}
\newcommand{\CA}{\mathcal{A}}
\newcommand{\CB}{\mathcal{B}}
\newcommand{\CC}{\mathcal{C}}
\newcommand{\CD}{\mathcal{D}}
\newcommand{\CF}{\mathcal{F}}
\newcommand{\CN}{\mathcal{N}}
\newcommand{\CO}{\mathcal{O}}
\newcommand{\CW}{\mathcal{W}}
\font\calcal=cmsy10 scaled\magstep1
\def\build#1_#2^#3{\mathrel{\mathop{\kern 0pt#1}\limits_{#2}^{#3}}}
\def\liml{\build{\longrightarrow}_{}^{{\mbox{\calcal L}}}}
\newcommand{\ep}{\mbox{$\varepsilon$}}
\newcommand\1{\leavevmode\hbox{\rm \small1\kern-0.35em\normalsize1}}
\newcommand\egaldef{\stackrel{\mbox{\upshape\tiny def}}{=}}
\DeclareMathOperator{\tr}{tr}
\DeclareMathOperator{\var}{var}
\def\E{\mbox{$\mathbb{E}$}}
\begin{document}
\ifRR

\RRdate{December 2008}
\RRauthor{Bernard Bercu \thanks{Université Bordeaux 1,
Institut de Mathématiques de Bordeaux UMR 5251, and INRIA Bordeaux Sud-Ouest, 351 cours de la Libération, 33405 Talence Cedex, France.  {\tt Bernard.Bercu@math.u-bordeaux1.fr}} \and 
Peggy Cénac \thanks{Université de Bourgogne, Institut de Mathématiques de Bourgogne, UMR 5584, 9 rue Alain Savary, BP 47870, 21078 Dijon Cedex, France.   {\tt Peggy.Cenac@u-bourgogne.fr}} \and
Guy Fayolle \thanks{INRIA CR Paris-Rocquencourt, Domaine de Voluceau, BP 105, 78153 Le Chesnay Cedex, France. {\tt Guy.Fayolle@inria.fr}}} 

\RRtitle{Théorème de la limite centrale pour les martingales vectorielles : convergence des moments et applications statistiques}

\RRetitle{On the Almost Sure Central Limit Theorem for Vector Martingales\,: Convergence of Moments and Statistical Applications}

\titlehead{Convergence of Moments in the ASCLT for Vector Martingales}

\RRresume{On étudie dans ce rapport des propriétés de convergence presque sûre de transformées de martingales vectorielles. Sous certaines conditions d'existence de moments et de régularité du processus croissant, on montre en particulier la convergence des moments normalisés de tout ordre pair dans le théorème central limite presque sûr pour les martingales vectorielles. On formule une conjecture de borne presque sûre, sous des hypothèses moins restrictives et couvrant des familles plus vastes de processus. Enfin, on applique ces résultats à des exemples issus d'applications statistiques, notamment les modèles autorégressifs linéaires et certains processus de branchement avec immigration, ce qui permet d'établir de nouvelles propriétés asymptotiques sur les erreurs d'estimation et
de prédiction.}

\RRabstract{We investigate the almost sure asymptotic properties of vector martingale transforms. Assuming some appropriate regularity conditions both on the increasing process and on the moments of the martingale, we prove that normalized moments of any even order converge in the almost sure cental limit theorem for martingales. A conjecture about almost sure upper bounds under wider hypotheses is formulated. The theoretical results are supported by examples borrowed from statistical applications, including linear autoregressive models and branching processes with immigration, for which new asymptotic properties are established on estimation and prediction errors.}

\RRmotcle{Théorème central limite presque sûr, martingale vectorielle, moment, regression stochastique.}

\RRkeyword{Almost sure central limit theorem, vector martingale, moment, stochastic regression.}

\RRprojet{Imara}

\RRtheme{\THNum}
\RRNo{6780}
\URRocq

\makeRR

\else
\title{On the Almost Sure Central Limit Theorem for Vector Martingales\,: Convergence of Moments and Statistical Applications} 

\author{Bernard Bercu \thanks{Université Bordeaux 1,
Institut de Mathématiques de Bordeaux UMR 5251, and INRIA Bordeaux Sud-Ouest, 351 cours de la Libération, 33405 Talence Cedex, France.  {\tt Bernard.Bercu@math.u-bordeaux1.fr}} \and 
Peggy Cénac \thanks{Université de Bourgogne, Institut de Mathématiques de Bourgogne, UMR 5584, 9 rue Alain Savary, BP 47870, 21078 Dijon Cedex, France.   {\tt Peggy.Cenac@u-bourgogne.fr}} \and
Guy Fayolle \thanks{INRIA CR Paris-Rocquencourt, Domaine de Voluceau, BP 105, 78153 Le Chesnay Cedex, France. {\tt Guy.Fayolle@inria.fr}}} 
\date{December 2008}

\maketitle

\begin{abstract} We investigate the almost sure asymptotic properties of vector martingale transforms. Assuming some appropriate regularity conditions both on the increasing process and on the moments of the martingale, we prove that normalized moments of any even order converge in the almost sure cental limit theorem for martingales. A conjecture about almost sure upper bounds under wider hypotheses is formulated. The theoretical results are supported by examples borrowed from statistical applications, including linear autoregressive models and branching processes with immigration, for which new asymptotic properties are established on estimation and prediction errors.
\end{abstract}

\keywords{Almost sure central limit theorem; vector martingale; moment; stochastic regression.}
\newpage
\fi
\section{Introduction}
Let $(X_{n})$ be a sequence of real independent identically distributed random variables with $\E[X_{n}]=0$ and $\E[X_{n}^{2}]=\sigma ^{2}$ and denote
$$S_{n}= \sum_{k=1}^n  X_k.$$
It follows from the ordinary central limit theorem (CLT) that 
$$
\frac{S_n}{\sqrt{n}}\liml \CN(0,\sigma^2),
$$
which implies, for any bounded continuous real function $h$
$$
\lim_{n \to \infty}
\E\Bigl[h\Bigl(\frac{S_{n}}{\sqrt{n}}\Bigr)\Bigr]=\int_{\Rset{}}h(x)dG(x)
$$
where $G$ stands for the Gaussian measure $\CN(0,\sigma ^{2})$. 
Moreover, by the celebrated almost sure central limit theorem (ASCLT), the empirical measure 
$$
G_n = \frac{1}{\log n}\sum_{k=1}^{n}\frac{1}{k}\delta_{\frac{S_{k}}{\sqrt{k}}}
$$
satisfies
$$
G_n \Longrightarrow G \hspace{1cm} \hbox{a.s.}
\vspace{2ex}
$$ 
In other words, for any bounded continuous real function $h$
$$
\lim_{n \to \infty}\frac{1}{\log n}\sum_{k=1}^{n}\frac{1}{k}h\Bigl(\frac{S_{k}}{\sqrt{k}}\Bigr) = \int_{\Rset{}}h(x)dG(x)
\hspace{1cm} \hbox{a.s.}
$$ 
The ASCLT was simultaneously proved by Brosamler \cite{Brosamler} and Schatte \cite{Schatte88} and, in its present form, 
by Lacey and Phillip \cite{LP}. In contrast with the wide literature on the ASCLT for independent random variables, very 
few references are available on the ASCLT for martingales apart the recent work of Bercu and Fort \cite{Bercu, BercuFort}
and the important contribution of Chaabane and Maaouia \cite{CM, Chaabane01} and 
Lifshits \cite{Lifshits01,Lifshits02}. The ASCLT for martingales is as follows.
Let $(\ep_{n})$ be a martingale difference sequence adapted to a
filtration $\F= (\CF_{n})$ with $\E[\ep_{n+1}^2|\CF_{n}]=\sigma ^2$ a.s. 
Let $(\Phi_{n})$ be a sequence of random variables
adapted to  $\F$ and denote by $(M_{n})$ the real martingale transform 
$$
M_{n}= \sum_{k=1}^{n}\Phi_{k-1}\ep_{k}.
$$
We also need to introduce the explosion coefficient  $f_{n}$ associated with 
$(\Phi_{n})$
$$
f_{n}=\frac{\Phi_{n}^2}{s_{n}} \hspace{1cm} \mbox{where} \hspace{1cm} 
s_{n}=\sum_{k=0}^{n}\Phi_{k}^2.
$$ 
As soon as $(f_n)$ goes to zero a.s. and under reasonable assumption on the conditional
momments of $(\ep_n)$, the ASCLT for martingales asserts that the empirical measure 
\begin{equation}
\label{tlcpsteo}
G_n = \frac{1}{\log s_n}\sum_{k=1}^{n}f_k\delta_{\frac{M_{k}}{\sqrt{s_{k-1}}}}
\Longrightarrow G \hspace{1cm} \hbox{a.s.}
\vspace{2ex}
\end{equation}
In other words, for any bounded continuous real function $h$
\begin{equation}
\label{tlcps}
\lim_{n \to \infty}\frac{1}{\log s_n}
\sum_{k=1}^{n}f_kh\Bigl(\frac{M_k}{\sqrt{s_{k-1}}}\Bigr)= 
\int_{\Rset{}^{}}h(x)dG(x)
\hspace{1cm} \hbox{a.s.}
\end{equation}

It is quite natural to overcome the case of unbounded functions $h$.
To be more precisely, one might wonder if convergence (\ref{tlcps}) remains true for unbounded functions $h$. 
It has been recently shown in  \cite{Bercu, BercuFort} that, whenever $(\ep_{n})$ has a finite conditional moment of order $>2p$ , then the convergence (\ref{tlcps}) still holds for any continuous real function $h$ such that
$| h(x) | \leq x^{2p}$.
\begin{theo} [Convergence of moments in the scalar case]
\label{tlcpsbernard}
Assume that $(\ep_n)$ is a martingale difference sequence such that $\E[\ep_{n+1}^2| \CF_{n}]=\sigma ^2$ a.s. and satisfying 
for some integer $p\geq 1$ and for some real number $a>2p$,
$$
\sup_{n \geq 0}\E[|\ep_{n+1}|^{a}|\CF_{n}] < \infty \hspace{1cm} \hbox{a.s.}
$$
If the explosion coefficient $(f_{n})$ tends to zero a.s., then
\begin{equation}
\label{cvmomentbernard}
\lim_{n \to \infty}\frac{1}{\log
  s_{n}}\sum_{k=1}^{n}f_{k}\Bigl(\frac{M_{k}^{2}}{s_{k-1}}\Bigr)^{p} =
  \frac{\sigma^{2p}(2p)!}{2^p p!} \hspace{1cm} \hbox{a.s.}
\end{equation}
\end{theo}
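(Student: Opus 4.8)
The result is, formally, the case $h(x)=x^{2p}$ of the extension of (\ref{tlcps}) to continuous functions dominated by $x^{2p}$ recalled just above; the plan is to make this self-contained by the truncation scheme that in fact underlies such an extension, reducing everything to an almost sure tail bound for the empirical measure $G_n$. First I would observe that the right-hand side is nothing but the $2p$-th moment of the limiting Gaussian,
$$
\int_{\Rset{}}x^{2p}\,dG(x)=\frac{\sigma^{2p}(2p)!}{2^p p!},
$$
so that (\ref{cvmomentbernard}) is exactly the assertion that (\ref{tlcps}) survives for the unbounded test function $h(x)=x^{2p}$. Setting $x_k=M_k/\sqrt{s_{k-1}}$, I would fix $A>0$ and a continuous truncation $h_A$ with $h_A(x)=x^{2p}$ for $|x|\le A$ and $0\le h_A\le x^{2p}$ throughout. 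For each fixed $A$ the function $h_A$ is bounded and continuous, so (\ref{tlcps}) yields
$$
\lim_{n\to\infty}\frac{1}{\log s_n}\sum_{k=1}^{n}f_k\,h_A(x_k)=\int_{\Rset{}}h_A\,dG\qquad\hbox{a.s.},
$$
while dominated convergence gives $\int_{\Rset{}}h_A\,dG\to\int_{\Rset{}}x^{2p}\,dG$ as $A\to\infty$, the Gaussian having finite moments of every order.

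It then remains to show that the discarded mass
$$
R_n(A)=\frac{1}{\log s_n}\sum_{k=1}^{n}f_k\bigl(x_k^{2p}-h_A(x_k)\bigr)\le\frac{1}{\log s_n}\sum_{k=1}^{n}f_k\,x_k^{2p}\,\ind{|x_k|>A}
$$
is, almost surely, uniformly small in $n$ as $A\to\infty$. Here the hypothesis $a>2p$ enters through a single interpolation: choosing a real exponent $q$ with $p<q\le a/2$, on the event $\{|x_k|>A\}$ one has $x_k^{2p}\le A^{2p-2q}x_k^{2q}$, whence $R_n(A)\le A^{2p-2q}\,(\log s_n)^{-1}\sum_{k=1}^{n}f_k\,x_k^{2q}$. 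Everything therefore reduces to the auxiliary almost sure upper bound
$$
\limsup_{n\to\infty}\frac{1}{\log s_n}\sum_{k=1}^{n}f_k\Bigl(\frac{M_k^2}{s_{k-1}}\Bigr)^{q}\le C<\infty\qquad\hbox{a.s.}
$$
Granting it, $\limsup_n R_n(A)\le C\,A^{2p-2q}$ with $2p-2q<0$; since $R_n(A)\ge0$, letting $n\to\infty$ and then $A\to\infty$ squeezes the normalized $2p$-th moment sum between $\int h_A\,dG$ and $\int h_A\,dG+C\,A^{2p-2q}$, both of which tend to $\int x^{2p}\,dG$, proving (\ref{cvmomentbernard}).

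The main obstacle is thus the auxiliary upper bound. I would attack it through a martingale analysis of the partial sums $T_n=\sum_{k=1}^{n}f_k(M_k^2/s_{k-1})^{q}$. Since the predictable quadratic variation of $(M_n)$ is $\langle M\rangle_k=\sigma^2 s_{k-1}$, the Burkholder--Davis--Gundy inequality combined with the uniform moment bound $\sup_n\E[\,|\ep_{n+1}|^{a}\,|\,\CF_n]<\infty$, available up to the order $2q\le a$, should give $\E[(M_k^2/s_{k-1})^{q}\,|\,\CF_{k-1}]\le C$ a.s.; the predictable part of $T_n$ is then dominated by $C\sum_{k=1}^{n}f_k$, which is of order $\log s_n$ because $f_k\to0$ forces $\sum_{k=1}^{n}f_k\sim\log s_n$. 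For the complementary martingale part, a Chow-type strong law, resting on the same moment bound, should show it to be $o(\log s_n)$ almost surely. The delicate points are that the weights $f_k=\Phi_k^2/s_k$ are $\CF_k$-measurable rather than predictable, so the conditional decomposition must be carried out with care, and that the Burkholder--Davis--Gundy constants must be made to respect the exact power $q$; the constraint $q\le a/2$ is precisely what the moment hypothesis can sustain.
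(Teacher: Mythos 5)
Your truncation--squeeze skeleton (bounded ASCLT for $h_A$ plus a uniform-in-$n$ almost sure tail bound) is logically sound, but the proof collapses exactly where all the difficulty lives: the auxiliary bound $\limsup_n (\log s_n)^{-1}\sum_{k=1}^n f_k (M_k^2/s_{k-1})^q \le C$ a.s. Your justification of it is wrong. The central claim $\E\bigl[(M_k^2/s_{k-1})^q \,\big|\, \CF_{k-1}\bigr]\le C$ a.s.\ cannot hold: since $s_{k-1}$ is $\CF_{k-1}$-measurable and $x\mapsto |x|^{2q}$ is convex, conditional Jensen gives
\begin{equation*}
\E\bigl[(M_k^2/s_{k-1})^q \,\big|\, \CF_{k-1}\bigr] \;=\; s_{k-1}^{-q}\,\E\bigl[M_k^{2q}\,\big|\,\CF_{k-1}\bigr] \;\ge\; \Bigl(\frac{M_{k-1}^2}{s_{k-1}}\Bigr)^{q},
\end{equation*}
and $M_{k-1}^2/s_{k-1}$ is \emph{not} almost surely bounded in $k$: under the present hypotheses it obeys a law of the iterated logarithm, so its $\limsup$ is generically $+\infty$ (of order $\log\log s_k$); the best available a.s.\ estimate is Wei's $M_n^{2}/s_{n-1}=o(\log s_n)$, i.e.\ relation (2.30) of \cite{Wei}. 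Hence the ``predictable part of $T_n$ is dominated by $C\sum_k f_k$'' step fails, and no Burkholder--Davis--Gundy or Rosenthal inequality repairs it, because the normalization $s_{k-1}$ and the weights $\Phi_k$ are themselves random and adapted (indeed $f_k$ is $\CF_k$-measurable, not predictable, a point you flag but do not resolve).

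More structurally, your reduction lands on a statement that is essentially the theorem itself with exponent $q$ in place of $p$ (and, worse, $q$ must in general be taken non-integer, since $a>2p$ need not leave an integer in $(p,a/2\,]$, whereas all known proofs of such bounds run on integer powers). The paper underlines how nontrivial exactly this kind of $\CO(\log d_n)$ bound is by stating it as an open Conjecture in the general vector setting. The actual proof (from \cite{Bercu}, reproduced in vector form in the proof of theorem~\ref{thm2} and Lemma~\ref{lemme_martingales}) proceeds quite differently: by induction on $p$, one raises the recursion $M_{n+1}^2=M_n^2+2\Phi_nM_n\ep_{n+1}+\Phi_n^2\ep_{n+1}^2$ to the $p$-th power as in (\ref{decompositioncasp}), isolates the drift term $\CA_n(p)$ as in (\ref{decompclep}), controls the noise terms $\CB_{n+1}(p)$ and $\CW_{n+1}(p)$ via Chow's lemma, the strong law of large numbers for martingales and the induction hypothesis for lower powers, and uses Wei's estimate to dispose of the boundary term $m_n^p=o(\log d_n)$; the limit then drops out of the identity rather than out of a squeeze. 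If you want to rescue your route, you would have to prove your auxiliary bound by precisely this inductive machinery --- at which point the truncation step becomes superfluous, since the induction delivers the exact limit (\ref{cvmomentbernard}) directly.
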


The limit (\ref{cvmomentbernard}) is exactly the moment of order $2p$ of the Gaussian distribution $\CN(0,\sigma^{2})$. 
The purpose of the present paper is to extend the results of \cite{Bercu} to vector martingale transforms, 
which is strongly needed in various applications arising in statistics and signal processing.

\medskip\noindent
Let $(M_{n})$ be a square integrable vector martingale with values in $\Rset{}^d$, adapted to a filtration $\F$. 
Its increasing process consists of the sequence $(\langle M \rangle _{n})$ of symmetric, positive semi-definite 
square matrices of order $d$ given by
\[\langle M \rangle _{n}=\sum_{k=1}^n
\E[(M_{k}-M_{k-1})(M_{k}-M_{k-1})^t|\CF_{k-1}].
\]
A first version of the ASCLT for discrete vector martingales was proposed in
\cite{CM, Chaabane98}, under fairly restrictive assumptions on the increasing process $(\langle M \rangle _{n})$. 
Hereafter, our goal is to establish the convergence of moments of even order in the ASCLT for $(M_{n})$ 
under suitable assumptions on the behaviour of $(\langle M \rangle _{n})$.
We shall work in the general setting of vector martingales transforms $(M_{n})$, which can be written as
$$
M_{n}=M_{0}+\sum_{k=1}^{n} \Phi_{k-1}\ep_{k}
$$
where $M_{0}$ can be taken arbitrary and $(\Phi_{n})$ denotes a sequence of random 
vectors of dimension $d$, adapted to $\F$.
We also introduce
\begin{equation} 
\label{Sn}
S_{n}=\sum_{k=0}^{n} \Phi_{k}\Phi_{k}^{t} +S
\end{equation}
where $S$ is a fixed deterministic matrix, symmetric and positive definite.
One can obviously see that if
$\E[\ep_{n+1}^2|\CF_n]=\sigma^2$ a.s., then the increasing process of  
$(M_{n})$ takes the form $\langle M \rangle_{n}=\sigma^{2}S_{n-1}$. 
The explosion coefficient associated with $(\Phi_{n})$ is now given by
\begin{equation} 
\label{def}
f_{n}= \Phi_{n}^{t}S_{n}^{-1}\Phi_{n}=\frac{d_n - d_{n-1}}{d_n}
\end{equation}
where $d_n=\det(S_n)$. 
\bigskip

After this short survey, the paper will be organized as follows.
The main theoretical result for vector martingale transforms is given in 
Section~\ref{sec:cv-moments}, at the end of which a quite plausible interesting conjecture is formulated, involving minimal assumptions. The final Section~\ref{ssec:martingalesapplications} proposes some statistical applications to estimation and prediction errors in linear  autoregressive models and in branching processes with immigration.

\section{On the convergence of moments}
\label{sec:cv-moments}
As mentioned above, our main result is given in theorem \ref{thm2} and extends  theorem  \ref{tlcpsbernard} to the vector case. By the way, in mathematics, the difficulty of the problem is almost always a strictly increasing function of the dimension of some underlying  space: it is also the case here !

\begin{theo}
\label{thm2}
Let $(\ep_n)$ be a martingale difference sequence satisfying the homogeneity condition $\E[\ep_{n+1}^2| \CF_{n}]=\sigma ^2$ a.s. and such that, 
for some integer $p\geq 1$ and some real number $a>2p$,
$$
\begin{array}{lll}
{\displaystyle \sup_{n \geq 0}}\,
\E\bigl[|\ep_{n+1}|^{a}\big|\CF_{n}\bigr]< \infty \hspace{1cm}\mbox{a.s.}
\end{array}
\leqno {(H_p)}
$$
In addition, assume that the explosion coefficient $f_{n}$ tends to zero a.s.
and that there exists a positive random sequence $(\alpha_n)$ increasing to infinity and
an invertible symmetric matrix $L$, such that
\begin{equation}
\label{convergence}
\lim_{n \to \infty}\frac{1}{\alpha_{n}}S_{n}=L \hspace{1cm}\mbox{a.s.}
\end{equation}
Then, the following limits hold almost surely
\begin{eqnarray}
\lim_{n\to \infty} \frac{1}{\log
  d_{n}}\sum_{k=1}^{n}f_{k}\bigl(M_{k}^{t}S_{k-1}^{-1}M_{k}\bigr)^{p}
  = \ell(p) = 
  d\sigma^{2p}\prod_{j=1}^{p-1}\bigl(d+2j\bigr),\label{thm2-1}\\
  \lim_{n\to \infty} \frac{1}{\log
  d_{n}}\sum_{k=1}^{n}\bigl(M_{k}^{t}S_{k-1}^{-1}M_{k}\bigr)^{p}-
  \bigl(M_{k}^{t}S_{k}^{-1}M_{k}\bigr)^{p} = \lambda(p) =
  \frac{p}{d}\ell(p). \label{thm2-2}
\end{eqnarray}
\end{theo}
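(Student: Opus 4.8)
The plan is to reduce both limits to a single induction on $p$, driven by the Sherman--Morrison update of $S_k^{-1}$ and by the asymptotic isotropy of the normalised martingale. Throughout I write $Q_k=M_k^tS_{k-1}^{-1}M_k$ and $W_k=M_k^tS_k^{-1}M_k$, and set $A_k=\Phi_{k-1}^tS_{k-1}^{-1}M_{k-1}$, $P_k=\Phi_k^tS_{k-1}^{-1}M_k$, recalling $f_{k-1}=\Phi_{k-1}^tS_{k-1}^{-1}\Phi_{k-1}$. From $M_k=M_{k-1}+\Phi_{k-1}\ep_k$ one gets the increment recursion
\[ Q_k=W_{k-1}+2\ep_kA_k+\ep_k^2f_{k-1}, \]
while Sherman--Morrison applied to $S_k=S_{k-1}+\Phi_k\Phi_k^t$ gives $Q_k-W_k=(1-f_k)P_k^2$. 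The Cauchy--Schwarz inequalities $A_k^2\le f_{k-1}W_{k-1}$ and $(1-f_k)P_k^2\le f_kQ_k$ quantify that both remainder terms are of order $f$, which tends to $0$; since $f_k\to0$ one also has $\sum_{k\le n}f_k\sim\log d_n$, so the weights $f_k/\log d_n$ form a Toeplitz averaging family.

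First I would treat \eqref{thm2-2} by telescoping. Writing $W_n^p-W_0^p=\sum_{k=1}^n\bigl[(Q_k^p-W_{k-1}^p)-(Q_k^p-W_k^p)\bigr]$ and using the law of the iterated logarithm for $(M_n)$ (legitimate under $(H_p)$ and \eqref{convergence}), so that $W_n=O(\log\log\alpha_n)$ and $W_n^p=o(\log d_n)$ a.s., the left-hand side is negligible after division by $\log d_n$. This reduces the study of $\frac{1}{\log d_n}\sum_k(Q_k^p-W_k^p)$ to that of $\frac{1}{\log d_n}\sum_k(Q_k^p-W_{k-1}^p)$. I then split each $Q_k^p-W_{k-1}^p$ into its $\CF_{k-1}$-conditional expectation and a martingale difference. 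Expanding $(W_{k-1}+2\ep_kA_k+\ep_k^2f_{k-1})^p$ and using $\E[\ep_k\,|\,\CF_{k-1}]=0$, $\E[\ep_k^2\,|\,\CF_{k-1}]=\sigma^2$ together with $A_k^2\le f_{k-1}W_{k-1}$ to discard the $o(f_{k-1})$ contributions, the predictable part is
\[ \E\bigl[Q_k^p-W_{k-1}^p\,\big|\,\CF_{k-1}\bigr]=p\sigma^2 f_{k-1}W_{k-1}^{p-1}+2p(p-1)\sigma^2 A_k^2 W_{k-1}^{p-2}+o(f_{k-1}). \]
The martingale part, normalised by $\log d_n$, vanishes a.s. by the strong law for martingales; this is precisely where $(H_p)$ with $a>2p$ is needed, since the conditional variance of $Q_k^p$ involves $\E[\ep_k^{2p}\,|\,\CF_{k-1}]$ and the surplus $a-2p>0$ supplies the integrability margin for the SLLN.

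The main obstacle is the rank-one term $A_k^2W_{k-1}^{p-2}$. Writing $u_k=S_{k-1}^{-1/2}\Phi_{k-1}$ and $v_k=S_{k-1}^{-1/2}M_{k-1}$, so that $A_k^2=(u_k^tv_k)^2$ and $f_{k-1}W_{k-1}=\|u_k\|^2\|v_k\|^2$, one must show that in the averaged sense $A_k^2W_{k-1}^{p-2}$ may be replaced by $\tfrac1d f_{k-1}W_{k-1}^{p-1}$; the same isotropy must replace $(1-f_k)P_k^2$ by $\tfrac1d f_kQ_k$. The difficulty is that $A_k^2-\tfrac1d f_{k-1}W_{k-1}$ is $\CF_{k-1}$-measurable, hence \emph{not} a martingale difference, so its vanishing after averaging cannot come from a variance estimate alone. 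I would establish it from a matrix-valued companion of the ASCLT, namely that $\frac{1}{\log d_n}\sum_k f_k\,\psi(W_{k-1})\,S_{k-1}^{-1/2}M_{k-1}M_{k-1}^tS_{k-1}^{-1/2}$ converges to a scalar multiple of $I_d$ for the polynomial weights $\psi$ arising here; isotropy of the limit is forced by the non-degeneracy hypothesis \eqref{convergence} with $L$ invertible, which guarantees that the regressor directions $u_k$ explore all of $\Rset{d}$. Granting this, $\frac{1}{\log d_n}\sum_k A_k^2W_{k-1}^{p-2}\to\tfrac1d\,\ell(p-1)$, the value $\ell(p-1)$ coming from the induction hypothesis and the equivalent identity $\frac{1}{\log d_n}\sum_k f_kW_{k-1}^{p-1}\to\ell(p-1)$ (since $W_k$ and $Q_k$ differ by a factor $1+O(f_k)$).

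Finally I would close the induction. Steps two and three give $\lambda(p)=p\sigma^2\tfrac{d+2p-2}{d}\,\ell(p-1)$, while applying the $P_k^2$-isotropy to $Q_k^p-W_k^p=pQ_k^{p-1}(Q_k-W_k)+o(f_kQ_k^p)$ yields the independent relation $\lambda(p)=\tfrac pd\,\ell(p)$. Eliminating $\lambda(p)$ produces the recursion $\ell(p)=\sigma^2(d+2p-2)\,\ell(p-1)$, with trivial base value $\ell(0)=1$ (as $\frac{1}{\log d_n}\sum_k f_k\to1$). This integrates to $\ell(p)=d\sigma^{2p}\prod_{j=1}^{p-1}(d+2j)$, and then $\lambda(p)=\tfrac pd\,\ell(p)$, which are exactly \eqref{thm2-1} and \eqref{thm2-2}. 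I expect the isotropy averaging of step three to be the delicate point, the dimension $d$ entering only there; everything else is the martingale SLLN and the bookkeeping of lower-order terms controlled by $f_k\to0$ and $(H_p)$.
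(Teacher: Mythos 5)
Your overall architecture (induction on $p$, the increment recursion for $Q_k$, the Sherman--Morrison identity for $Q_k-W_k$, the predictable/martingale splitting with Chow-type strong laws under $(H_p)$, the weights $f_k/\log d_n$ with $\sum_{k\leq n}f_k\sim\log d_n$) matches the paper's, and your final relations $\ell(p)=\sigma^2(d+2p-2)\,\ell(p-1)$ and $\lambda(p)=\tfrac{p}{d}\ell(p)$ are exactly right. But there is a genuine gap at the point you yourself flag as delicate: the replacement, in logarithmic average, of $A_k^2W_{k-1}^{p-2}$ by $\tfrac1d f_{k-1}W_{k-1}^{p-1}$ (and of $(1-f_k)P_k^2$ by $\tfrac1d f_kQ_k$). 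You postulate a ``matrix-valued companion of the ASCLT'', namely $\frac{1}{\log d_n}\sum_k f_k\,\psi(W_{k-1})\,v_kv_k^t\to cI_d$ with $v_k=S_{k-1}^{-1/2}M_{k-1}$, justified only by the remark that invertibility of $L$ makes the regressors explore all directions. That is not a proof, and, worse, the lemma as stated would not suffice even if proved: convergence of the aggregated matrix to $cI_d$ controls quadratic forms $e^t(\cdot)\,e$ in a \emph{fixed} direction $e$, whereas you need $\frac{1}{\log d_n}\sum_k f_{k-1}W_{k-1}^{p-2}\bigl(\hat u_k^tv_k\bigr)^2$ where the unit vectors $\hat u_k=S_{k-1}^{-1/2}\Phi_{k-1}/\|S_{k-1}^{-1/2}\Phi_{k-1}\|$ change with $k$ and are $\CF_{k-1}$-measurably coupled to $v_k$. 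An aggregate limit $cI_d$ is compatible, for instance, with $\hat u_k\perp v_k$ for every $k$, in which case your cross term would vanish instead of producing the factor $\tfrac1d$. The per-term, uniform-over-directions isotropy you actually need is essentially as hard as the theorem itself. (Separately, your appeal to a LIL to get $W_n^p=o(\log d_n)$ would itself need justification; the paper gets this from equation (2.30) of Wei.)

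The paper closes precisely this hole with two devices you do not use. First, hypothesis \eqref{convergence} is exploited head-on: replacing $S_k^{-1}$ by $\alpha_k^{-1}L^{-1}$ reduces everything to the scalar-like quantities $m_n,\gamma_n,\delta_n,\beta_n$ of (\ref{defbeta}), the dimension entering only through the deterministic equivalences $\beta_n\sim d\alpha_n$, $f_n\sim d\gamma_n$, $V_n\sim dm_n$ and (\ref{psik}). Second --- and this is the key you already hold without noticing --- the Sherman--Morrison identity you wrote down for \eqref{thm2-2} identifies the troublesome cross term exactly: since $A_k=(1-f_{k-1})P_{k-1}$, one has $A_k^2=(1-f_{k-1})\,a_{k-1}(1)$ where $a_k(1)=Q_k-W_k$; this is the identity the paper records as (\ref{gk}). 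Hence $\sum_kA_k^2W_{k-1}^{p-2}$ is not an isotropy question at all: it is a weighted sum of Riccati increments, and its limit is supplied by the \emph{second} assertion of the theorem at order $p-1$. That is why the paper proves \eqref{thm2-1} and \eqref{thm2-2} jointly, as the two halves (\ref{lemme-Vn}) and (\ref{raccourci-an}) of Lemma~\ref{lemme_martingales}, in a single induction in which the cross term of each statement at order $p$ is controlled by the other statement at order $p-1$. If you replace your unproven isotropy lemma by this identity and run the induction on the pair $\bigl(\ell(p),\lambda(p)\bigr)$ rather than on $\ell(p)$ alone, your argument becomes, in substance, the paper's proof.
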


\begin{rem}
The limit $\ell(p)$ corresponds exactly to the moment of order $2p$ of the norm of a gaussian 
vector $\CN(0,\sigma^2 I_d)$, so that theorem~\ref{thm2} shows indeed the convergence of moments of order $2p$ in the ASCLT for  vector martingales. 
Here the deterministic normalization given in \cite{CM} has been replaced by the  
natural random normalization given by the increasing process.
\end{rem}

\begin{rem}
The convergence hypothesis (\ref{convergence}) clearly implies $f_{n}\to 0$  a.s., if and only if
$\alpha_{n}\sim\alpha_{n-1}$ a.s. As a matter of fact, we deduce from (\ref{convergence}) that
$$ \lim_{n \to \infty}\frac{d_n}{\alpha_{n}^{d}}=\det L>0 \hspace{1cm}\mbox{a.s.}$$
\end{rem}

\begin{proof}
For the sake of shortness, we shall define the following variables.
\begin{eqnarray}
V_{n}&=& M_{n}^{t}S_{n-1}^{-1}M_{n},\label{Vn}\\
\varphi_{n}&=&\alpha_{n}^{-1}\Phi_{n}^{t}L^{-1}\Phi_{n}, \nonumber\\
v_{n}&=&\alpha_{n-1}^{-1}M_{n}^{t}L^{-1}M_{n}. \nonumber
\end{eqnarray}

First of all, by using the symmetry of  $L$,  the 
convergence (\ref{convergence}) ensures that 
\begin{eqnarray}
f_{n}&=&\varphi_{n}+o\bigl(\varphi_{n}\bigr) \quad \mbox{a.s.} \label{simplif}\\
V_{n}&=&v_{n}+o\bigl(v_{n}\bigr)
\quad \mbox{a.s.} \label{simpliV}
\end{eqnarray}
For we have
\[
f_{n}=\varphi_{n}+\alpha_{n}^{-1}\Phi_{n}^{t}L^{-1/2}\bigl(\alpha_{n}L^{1/2}S_{n}^{-1}L^{1/2}-I\bigr)L^{-1/2}\Phi_{n}.
\]
The matrix $R_{n}=\alpha_{n}L^{1/2}S_{n}^{-1}L^{1/2}-I$ is symmetricand denoting by $\rho_{n}$ its spectral radius, we can write
\[\Bigl|\alpha_{n}^{-1}\Phi_{n}^{t}L^{-1/2}R_{n}L^{-1/2}\Phi_{n}\Bigr|
\leq \rho_{n} \ \varphi_{n},\]
and $\rho_{n}$ converges to $0$ almost surely, which leads to (\ref{simplif}). The equation (\ref{simpliV}) is proved in the same way from the decomposition
\[V_{n}=v_{n}+M_{n}^{t}\bigl(S_{n-1}^{-1}-\alpha_{n-1}^{-1}L^{-1}\bigr)M_{n}.\]
Hence, by (\ref{simpliV}),  $V_{n}^{p}=v_{n}^{p} + o\bigl(v_{n}^{p}\bigr)$ a.s. 
In order to find the limit (\ref{thm2-1}), il suffices, by Toeplitz lemma, to study the convergence
\begin{equation}
\label{raccourci}
\lim_{n \to \infty}\frac{1}{\log d_{n}}\sum_{k=1}^{n}f_{k}V_{k}^{p}=\lim_{n \to
  \infty}\frac{1}{\log d_{n}}\sum_{k=1}^{n}\varphi_{k}v_{k}^{p}.
\end{equation}
Theorem~\ref{thm2} will be proved by induction with respect to $p\geq 1$, as  
in  \cite{Bercu} in the scalar case. The first step consists in writing  a recursive relation for $M_{n}^{t}L^{-1}M_{n}$.

Let
\begin{equation} \label{defbeta}
\begin{split}
\beta_{n} &=\tr\bigl(L^{-1/2}S_nL^{-1/2}\bigr),  \quad 
\gamma_{n} = \frac{\beta_{n}-\beta_{n-1}}{\beta_{n}}, \\
 \delta_{n} & =\frac{M_{n}^{t}L^{-1}\Phi_{n}}{\beta_{n}} , \hspace{1.5cm}
m_{n} = \beta_{n-1}^{-1}M_{n}^{t}L^{-1}M_{n}.
\end{split}
\end{equation}
According to the definition of  $(M_{n})$, the following decomposition holds
\begin{equation*}
M_{n+1}^{t}L^{-1}M_{n+1}=M_{n}^{t}L^{-1}M_{n} +
2\ep_{n+1}\Phi_{n}^{t}L^{-1}M_{n}+\ep_{n+1}^{2}\Phi_{n}^{t}L^{-1}\Phi_{n},
\end{equation*}
so that
\begin{equation} \label{decompMn}
m_{n+1}=(1-\gamma_n)m_n+ 2\delta_n\ep_{n+1}+\gamma_n\ep_{n+1}^{2}.
\end{equation}
The theorem relies essentially on the following lemma.
\begin{lem}\label{lemme_martingales}
Under the assumptions of theorem~\ref{thm2}, we have  
\begin{equation}\label{lemme-Vn}
\lim_{n\to \infty} \frac{1}{\log
  d_{n}}\sum_{k=1}^{n}\gamma_{k}m_{k}^{p}=\frac{\ell(p)}{d^{p+1}}\hspace{1cm}\mbox{a.s.}
\end{equation}
In addition, if $g_n=M_n^tS_{n-1}^{-1}\Phi_n$, we also have
\begin{equation}\label{raccourci-an}
\lim_{n\to \infty}\frac{1}{\log
  d_{n}}\sum_{k=1}^{n}(1-f_k)g_k^2m_{k}^{p-1}=\frac{\lambda(p)}{pd^{p-1}} \hspace{1cm}\mbox{a.s.}
\end{equation}
\end{lem}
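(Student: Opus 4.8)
The plan is to establish both limits simultaneously by induction on $p\ge1$, exploiting two companion scalar recursions: the one for $m_n=\beta_{n-1}^{-1}M_n^tL^{-1}M_n$ already recorded in (\ref{decompMn}), and a second one for $V_n=M_n^tS_{n-1}^{-1}M_n$ that I would derive from the Sherman--Morrison formula. Using $S_n=S_{n-1}+\Phi_n\Phi_n^t$, the definition (\ref{def}) of $f_n$, and the identities $\Phi_n^tS_n^{-1}M_n=(1-f_n)g_n$ and $M_n^tS_n^{-1}M_n=V_n-(1-f_n)g_n^2$, the increment $M_{n+1}=M_n+\Phi_n\ep_{n+1}$ gives
\begin{equation*}
V_{n+1}=V_n-(1-f_n)g_n^2+2(1-f_n)g_n\ep_{n+1}+f_n\ep_{n+1}^2 .
\end{equation*}
Both $m_n$ and $V_n$ thus obey scalar recursions with explicit drifts. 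The hypothesis (\ref{convergence}) and the argument already used for (\ref{simplif})--(\ref{simpliV}) yield, with $\beta_n\sim d\,\alpha_n$ and $d_n\sim\alpha_n^d\det L$, the asymptotic equivalences $V_n\sim d\,m_n$, $f_n\sim d\,\gamma_n$, $(1-f_n)\to1$ and $g_n^2\sim d^2\delta_n^2$; since all quantities involved are nonnegative, the Toeplitz lemma lets me pass freely between the two families of weighted averages.

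I would first fix the elementary logarithmic normalizations $\sum_{k\le n}f_k\sim\log d_n$ and $\sum_{k\le n}\gamma_k\sim\frac1d\log d_n$, which settle the case $p=1$ directly. For the inductive step I would expand $m_{n+1}^p$ with (\ref{decompMn}), take conditional expectations using $\E[\ep_{n+1}|\CF_n]=0$, $\E[\ep_{n+1}^2|\CF_n]=\sigma^2$ and the Cauchy--Schwarz bound $\delta_n^2\le\gamma_nm_n$; after telescoping $\sum_{k\le n}(m_{k+1}^p-m_k^p)$ and dividing by $\log d_n$ the surviving drift relation forces
\begin{equation*}
\lim_n\frac1{\log d_n}\sum_{k=1}^n\gamma_km_k^p
=\sigma^2\lim_n\frac1{\log d_n}\sum_{k=1}^n\gamma_km_k^{p-1}
+2(p-1)\sigma^2\lim_n\frac1{\log d_n}\sum_{k=1}^n\delta_k^2m_k^{p-2},
\end{equation*}
in which the two right-hand limits are, respectively, the induction hypothesis (\ref{lemme-Vn}) at order $p-1$ and, up to the factor $d^2$, the hypothesis (\ref{raccourci-an}) at order $p-1$. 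Substituting $\ell(p-1)$ and $\lambda(p-1)=\frac{p-1}{d}\ell(p-1)$ and using $\ell(p)=\sigma^2\bigl(d+2(p-1)\bigr)\ell(p-1)$ gives (\ref{lemme-Vn}) at order $p$. Running the identical computation on $V_{n+1}^p$ produces the twin relation whose right-hand side involves $\lim\frac1{\log d_n}\sum f_kV_k^{p-1}$ (again the hypothesis at order $p-1$, via $V_k\sim d\,m_k$) and $\lim\frac1{\log d_n}\sum(1-f_k)g_k^2V_k^{p-2}$ (the hypothesis (\ref{raccourci-an}) at order $p-1$); converting back through $V_k\sim d\,m_k$ and $g_k^2\sim d^2\delta_k^2$ then yields (\ref{raccourci-an}) at order $p$. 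The two statements therefore feed each other across consecutive orders and the induction closes.

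The main obstacle, and the place where $(H_p)$ with $a>2p$ is indispensable, is to prove that everything discarded above is genuinely $o(\log d_n)$ almost surely. The purely martingale parts, such as $\sum_{k\le n}\delta_km_k^{p-1}\ep_{k+1}$ and $\sum_{k\le n}\gamma_k(\ep_{k+1}^2-\sigma^2)m_k^{p-1}$, I would control by the strong law for square-integrable martingales, bounding their predictable quadratic variations with $\delta_k^2\le\gamma_km_k$ together with an almost sure logarithmic bound $m_k=O(\log\log d_k)$ of law-of-the-iterated-logarithm type. The higher binomial terms $w_k^i$, $i\ge3$, where $w_k=2\delta_k\ep_{k+1}+\gamma_k\ep_{k+1}^2$, carry either extra factors $\gamma_k\to0$ or conditional moments of $\ep_{n+1}$ up to order $2p$, and these I would handle by truncation, which is precisely why a conditional moment of order $a>2p$ is needed. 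Finally, the same bound $m_k=O(\log\log d_k)$ makes the telescoped boundary term $m_{n+1}^p=o(\log d_n)$. This estimation of the remainders is the heart of the argument; once it is in place, the two drift identities combine exactly as above.
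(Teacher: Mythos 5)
Your overall architecture is the same as the paper's: the recursion (\ref{decompMn}) raised to the power $p$, the companion recursion for $V_n$ (the paper writes it as $V_{n+1}=h_n+2\ep_{n+1}g_n+\ep_{n+1}^2f_n$ with $h_n=M_n^tS_n^{-1}M_n$, which is exactly your Sherman--Morrison identity), the telescoping into a drift plus martingale remainders, the equivalences $f_n\sim d\gamma_n$, $V_n\sim dm_n$, $g_n^2= d^2\delta_n^2+o(\gamma_nm_n)$ (the paper's (\ref{psik})), and the induction in which (\ref{lemme-Vn}) and (\ref{raccourci-an}) at order $p-1$ feed the drift identity at order $p$. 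Your drift arithmetic is correct and reproduces the paper's computation of $\lim \CD_n(2)/\log d_n$: with $\lambda(p-1)=\tfrac{p-1}{d}\ell(p-1)$ and $\ell(p)=\sigma^2(d+2(p-1))\ell(p-1)$ one indeed recovers $\ell(p)/d^{p+1}$, and the twin relation for $V^p$ closes (\ref{raccourci-an}) exactly as the paper indicates.

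The gap is in what you yourself call the heart of the argument. The almost sure bound $m_k=O(\log\log d_k)$ that you invoke to control the boundary term and the quadratic variations is not a consequence of the hypotheses of Theorem~\ref{thm2}, and is in fact false in general: a genuine law-of-the-iterated-logarithm bound for martingale transforms requires a rate on the explosion coefficient, not merely $f_n\to0$. Concretely, take $d=1$, deterministic weights with $s_n=e^{n^{\kappa}}$ where $1-2/a<\kappa<1$ (so $f_n\approx\kappa n^{\kappa-1}\to 0$ and (\ref{convergence}) holds trivially with $\alpha_n=s_n$), and $(\ep_n)$ i.i.d.\ symmetric with tail $\PP(|\ep_1|>t)\sim t^{-a}(\log t)^{-2}$, which satisfies $(H_p)$; by the second Borel--Cantelli lemma, $\ep_n^2\geq n^{2/a-o(1)}$ infinitely often, and on (at least half of) those occasions the last increment dominates, giving $m_n\geq f_{n-1}\ep_n^2\gtrsim n^{\kappa-1+2/a-o(1)}$ infinitely often, a polynomial lower bound, while $\log\log d_n=O(\log n)$. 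What the hypotheses actually provide --- and what the paper uses in place of your bound --- is relation (2.30) of \cite{Wei}, which yields $m_k^p=o\bigl((\log d_n)^{\delta}\bigr)$ for some $\delta<1$; the condition $a>2p$ is needed precisely so that this exponent $\delta$ can be taken below $1$ (and for the Chow-lemma treatment of $\CW_{n+1}(p)=\sum_k\gamma_k^p\ep_{k+1}^{2p}$), not only for the truncation of higher binomial terms as you suggest. Your argument can be repaired by substituting Wei's bound for the LIL bound everywhere: combined with the induction hypothesis it still gives, e.g., $\sum_{k\le n}\gamma_km_k^{2p-1}=o\bigl((\log d_n)^{1+\delta}\bigr)$, hence the martingale remainders are $o(\log d_n)$ by the strong law, and the boundary term $m_{n+1}^p$ is $o(\log d_n)$; but as written, the step you rest the remainder control on would fail.
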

\begin{proof}
Raising equality (\ref{decompMn}) to the power $p$ implies 
\begin{equation} \label{decompositioncasp}
m_{n+1}^{p}=\sum_{k=0}^{p}\sum_{\ell=0}^{k}2^{k-\ell}C_{p}^{k}C_{k}^{\ell}\
\gamma_{n}^{\ell}\delta_{n}^{k-\ell}\bigl((1-\gamma_n)m_{n}
\bigr)^{p-k}\ep_{n+1}^{k+\ell}.
\end{equation}

After some straightforward simplifications, we obtain the relation
\begin{equation}
\label{decompclep}
m_{n+1}^{p}+\CA_{n}(p)=m_{1}^{p}+ \CB_{n+1}(p)+\CW_{n+1}(p),
\end{equation}
where
\[
\CA_{n}(p) =
\sum_{k=1}^{n}\beta_{k}^{-p}\bigl(\beta_{k}^{p}-\beta_{k-1}^{p}\bigr)m_{k}^{p},
\quad  
\CB_{n+1}(p) = \sum_{\ell=1}^{2p-1} \sum_{k=1}^{n}b_{k}(\ell)\ep_{k+1}^{\ell},
\] 
\[\CW_{n+1}(p) = \sum_{k=1}^{n}\gamma_{k}^{p}\ep_{k+1}^{2p}. 
\]
For $1 \leq \ell \leq p-1$, we have
\[
b_{k}(\ell)= \sum_{j=0}^{\lfloor \ell/2 \rfloor}2^{\ell
  -2j}C_{p}^{\ell-j}C_{\ell -j}^{j}\gamma_{k}^{j}\delta_{k}^{\ell
-2j}\bigl((1-\gamma_{k})m_{k}\bigr)^{p-\ell+j},
\] 
while, for $p \leq \ell \leq 2p-1$, 
\[
\begin{split}
b_{k}(\ell) & = \sum_{j=\ell-(p-1)}^{\lfloor \ell/2 \rfloor}2^{\ell
  -2j}C_{p}^{\ell-j}C_{\ell -j}^{j}\gamma_{k}^{j}\delta_{k}^{\ell
  -2j}\bigl((1-\gamma_{k})m_{k}\bigr)^{p-\ell+j} \\[0.2cm]
&+C_{p}^{\ell-p}2^{2p-\ell}-\delta_{k}^{2p-\ell}\gamma_{k}^{\ell-p}. 
\end{split}
\]
In order to take out useful information about $\CA_{n}(p)$, it is necessary to  study the  asymptotic behaviour  $\CW_{n+1}(p), \CB_{n+1}(p)$ and  $m_{n}^{p}$.
 
\paragraph {The case $p=1.$} Remarking that $\log \beta_n \sim \sum_{k=1}^n\gamma_k$, Chow's lemma  \cite[page \ 22]{Marie} implies
\[
\lim_{n \to \infty}\frac{1}{\log \beta_{n}}\CW_{n+1}(1)=\sigma^2 \hspace{1cm}\mbox{a.s.}
\]
Applying the strong law of large numbers for martingales and the  inequality $\delta_n^{2}\leq \gamma_n m_n$, we get  $\CB_{n+1}(1)=o\bigl(\CA_n(1)\bigr)$ a.s. In addition, from relation  (2.30) in \cite{Wei},  it follows that $m_{n+1}=o(\log \beta_n)$ a.s. 
Consequently, by (\ref{decompclep}), 
\[\lim_{n \to \infty}\frac{1}{\log \beta_{n}}\CA_{n}(1)=\sigma^2 \hspace{1cm}\mbox{a.s.}\]
But the basic convergence assumption (\ref{convergence}) implies immediately
\[
\lim_{n \to \infty}\frac{\beta_{n}}{\gamma_{n}}=d \hspace{1cm}\mbox{a.s.},
\]
so that $\log d_n \sim d \log \beta_n$ a.s.
Hence
\[
\lim_{n \to \infty}\frac{1}{\log d_{n}}\CA_{n}(1)=\frac{\sigma^2}{d} \hspace{1cm}\mbox{a.s.,}
\]
which establishes (\ref{lemme-Vn}). 

As for the proof of (\ref{raccourci-an}), one can proceed in the same way, starting from the decomposition
\begin{eqnarray*}
V_{n+1}&=&M_{n}^{t}S_{n}^{-1}M_{n} +
2\ep_{n+1}\Phi_{n}^{t}S_{n}^{-1}M_{n}+\ep_{n+1}^{2}f_{n}\\
&=& h_{n} +2\ep_{n+1}g_{n}+\ep_{n+1}^{2}f_{n}.
\end{eqnarray*}
which, for all $n\geq 1$, leads to
\begin{equation}\label{eq:decomp}
V_{n+1}+A_{n}=V_{1}+B_{n+1}+W_{n+1},
\end{equation}
where
\[
A_{n} \egaldef \sum_{k=1}^{n}a_{k}(1), \quad
B_{n+1}\egaldef  2\sum_{k=1}^{n}\ep_{k+1}g_{k}, \quad W_{n+1}\egaldef
\sum_{k=1}^{n}\ep_{k+1}^{2}f_{k}.
\]

By Riccati's formula,
\begin{equation} \label{gk}
g_{n}^{2}=(1-f_{n})a_{n}(1),
\end{equation}
and, hence, coupling (\ref{gk}) with the strong law of large numbers for martingales,
\[B_{n+1}=o\bigl(A_{n}\bigr) \quad \mbox{a.s.}
\]
On the other hand, $m_{n}=o\bigl(\log \beta_{n}\bigr)$, which, with (\ref{simpliV}), gives the almost sure estimate $V_{n}=o\bigl(\log d_{n}\bigr)$.

Now, since $\sum_{k=1}^{n}f_{k} \sim \log d_{n}$, Chow's lemma implies 
\[\lim_{n \to \infty}\bigl(\log d_{n}\bigr)^{-1}W_{n+1}=\sigma^{2} \quad \mbox{a.s.},
\] 
and to conclude the proof of  Lemma~\ref{lemme_martingales} for $p=1$, it suffices to divide (\ref{eq:decomp}) by $\log d_{n}$, letting $n\to\infty$.

\paragraph{The case $p \ge 2$.} First, using again Chow's lemma, we can write
\[\CW_{n+1}(p)=o\bigl(\log d_{n}\bigr)\hspace{1cm}\mbox{a.s.}
\]
 Also, we shall show that
\begin{equation} \label{preuvepourB}
\CB_{n+1}(p)=\frac{p}{d^{p+1}}\ell(p)\log d_{n}+ o\bigl(\log d_{n}\bigr)+o\bigl(\CA_n(p)\bigr) \hspace{0.5cm}\mbox{a.s.}
\end{equation}
 Setting, for $1\leq \ell \leq 2p-1$,
\begin{equation}
\label{decompbruit}
\ep_{n+1}^{\ell}\egaldef e_{n+1}(\ell)+\E[\ep_{n+1}^{\ell}|\CF_{n}]=
e_{n+1}(\ell)+\sigma_{n}(\ell),
\end{equation}
we write $\sum_{k=1}^{n}b_{k}(\ell)\ep_{k+1}^{\ell}= \CC_{n+1}(\ell)+\CD_{n}(\ell)$, with
\[\CC_{n+1}(\ell)\egaldef \sum_{k=1}^{n}b_{k}(\ell)e_{k+1}(\ell), \quad \mbox{and}
\quad
\CD_{n}(\ell)\egaldef \sum_{k=1}^{n}b_{k}(\ell)\sigma_{k}(\ell).\]
First, for any $\ell$ such that $1\leq \ell \leq p-1$, using again the strong law of large numbers and equation (2.30) of \cite{Wei}, we have
\[\CC_{n+1}(\ell)=o\bigl(\log d_{n} \bigr) \quad \mbox{a.s.}\]
Suppose $3 \leq \ell\leq p-1$. From Hölder's inequality and the assumptions on the moments of $\ep_n$, it follows that, for all $1\leq j \leq 2p-1$, $ |\sigma_{n}(j)|$ is bounded, which implies
\[\bigl|\CD_{n}(\ell)\bigr|=\CO\Bigl(\sum_{k=1}^{n}\gamma_{k}^{\ell/2}m_{k}^{p-\ell/2}\Bigr).\]
For even $\ell$, the induction assumption leads to $\CD_{n}(\ell)=o(\log d_{n})$ a.s. When $\ell$ is odd, Cauchy Schwarz inequality and the induction assumption yield 
\[
|\CD_{n}(\ell)|=\CO
\Bigl(\bigl(\sum_{k=1}^{n}\gamma_{k}m_{k}^{p-1}\bigr)^{1/2}\bigl(\sum_{k=1}^{n}\gamma_{k}^{\ell}m_{k}^{p-\ell}\bigr)^{1/2}\Bigr)=o\bigl(\log
d_{n}\bigr)\quad \mbox{a.s.}
\]
Suppose now $p \leq \ell \leq 2p-1$. It is easy to obtain
\[|\CD_{n}(\ell)|=\CO\Bigl(\sum_{k=1}^{n}\gamma_{k}^{\ell/2}m_{k}^{p-\ell/2}\Bigr)
\quad \mbox{a.s.}\]
Now, from the induction assumption, we get, for any integer $\ell \neq 2$, 
\[\CD_{n}(\ell)=o\bigl(\log d_{n}\bigr) \quad \mbox{a.s.}\]
It remains to study $\CC_{n+1}(\ell)$. By Chow's lemma, we have the  almost sure equality
\[
\CC_{n+1}(\ell)=o\bigl(\nu_{n}(\ell)\bigr), \quad \mbox{with} \quad \nu_{n}(\ell)=\sum_{k=1}^{n}|b_{k}(\ell)|^{2p/\ell}=\CO\Bigl(\sum_{k=1}^{n}\gamma_{k}^{p}m_{k}^{(2p-\ell)p/\ell}\Bigr).
\]
 For $\ell>p$, we apply Hölder's inequality with exponents $\ell/p$ and $\ell/(\ell-p)$. Then, $\nu_{n}(\ell)=o(\log d_n)$ a.s. In the particular case $p=\ell$, we get by the strong law of large numbers
\[
|\CC_{n+1}(\ell)|^2=\CO\bigl(\tau_{n}(p)\log\tau_{n}(p) \bigr) \quad \mbox{with} \quad \tau_{n}(p)= \sum_{k=1}^{n}b_{k}(p)^2=\CO\Bigl(\sum_{k=1}^{n}\gamma_{k}^{p}m_{k}^{p}\Bigr),
\]
which leads to $|\CC_{n+1}(\ell)|=o(\log d_n)$ a.s., since, from equation  (2.30) of \cite{Wei}, 
$m_k^p=o\bigl((\log d_n)^\delta\bigr)$, for $0<\delta<1$.
As for the last term $\CD_{n}(2)$, one needs to study, for $p\geq 3$, the quantity
\begin{eqnarray*}
\frac{\CD_{n}(2)}{\log d_n}&=&\frac{\sigma^2}{\log
  d_n}\sum_{k=1}^{n}\sum_{j=0}^{1}2^{2-2j}C_p^{2-j}C_{2-j}^{j}\gamma_{k}^j\delta_{k}^{2-2j}m_k^{p-2+j},\\
&=&\frac{2p(p-1)\sigma^2}{\log
  d_n}\sum_{k=1}^{n}\delta_{k}^{2}m_k^{p-2}+ \frac{p\sigma^2}{\log
  d_n}\sum_{k=1}^{n}\gamma_{k}m_k^{p-1}.
\end{eqnarray*}
It is easy to establish
\begin{equation}
\label{psik}
g_{n}^{2}=\bigl(\Phi_{n}^{t}S_{n-1}^{-1}M_{n}\bigr)^{2}=d^2 \ \delta_n^2+o\bigl(\gamma_{n}m_{n}\bigr).
\end{equation}
Then, the induction assumption and Toeplitz lemma imply
\[\lim_{n \to \infty} \frac{1}{\log
  d_{n}}\sum_{k=1}^{n}\delta_{k}^{2}m_{k}^{p-2}=\lim_{n \to \infty}
\frac{1}{d^{2}\log d_{n}}\sum_{k=1}^{n}a_{k}(1)m_{k}^{p-2}=\frac{\lambda(p-1)}{(p-1)d^{p}}\quad \mbox{a.s.}
\]
Thus, we obtain
\[
\lim_{n \to \infty} \frac{\CD_{n}(2)}{\log
  d_n}=\ell(p-1)\Bigl(\frac{2p(p-1)\sigma^2}{d^{p+1}}+\frac{p\sigma^2}{d^p}\Bigr)=\frac{p}{d^{p+1}}\ell(p) \quad \mbox{a.s.},
  \]
[this formula being also valid for $p=2$] which proves (\ref{preuvepourB}).

On the other hand, still applying  equation (2.30) of \cite{Wei}, we derive the estimate $m_{n}^{p}=o(\log d_n)$. Thus,
\[\lim_{n \to \infty}\frac{1}{\log d_{n}}\CA_{n}(p)=\frac{p}{d^{p+1}}\ell(p) \hspace{1cm}\mbox{a.s.}
\]
Since 
\begin{equation}
\label{equivp}
\frac{\beta_{n}^{p}-\beta_{n-1}^{p}}{\beta_{n}^{p}}=\gamma_{n}\sum_{q=0}^{p-1}\Bigl(\frac{\beta_{n-1}}{\beta_{n}}\Bigr)^{p-1-q}\sim
p \gamma_{n} \quad \mbox{a.s.},
\end{equation}
we get finally 
\[\lim_{n \to \infty}\frac{1}{\log
  d_{n}}\sum_{k=1}^{n}\gamma_{k}m_{k}^{p}=\lim_{n \to \infty}\frac{1}{p\log
  d_{n}}\CA_{n}(p)=\frac{\ell(p)}{d^{p+1}},\]
and the proof of (\ref{lemme-Vn}) is terminated.

As for the second part of  Lemma~\ref{lemme_martingales}, i.e. equation 
(\ref{raccourci-an}), we could proceed along similar lines,  via the equality
\[
V_{n+1}^{p}=\sum_{k=0}^{p}\sum_{\ell=0}^{k}2^{k-\ell}C_{p}^{k}C_{k}^{\ell}\
f_{n}^{\ell}g_{n}^{k-\ell}h_{n}^{p-k}\ep_{n+1}^{k+\ell} \,,
\]
with $g_n=\Phi_{n}^{t}S_{n-1}^{-1}M_{n}$ and $h_{n}=M_{n}^{t}S_{n}^{-1}M_{n}$.
\end{proof}
The  proof of theorem \ref{thm2} is completed as relations (\ref{lemme-Vn}) and (\ref{raccourci-an}) are clearly direct consequences of (\ref{thm2-1}) and (\ref{thm2-2}), respectively. Indeed, since $\beta_{n} \sim d \alpha_{n}$, we have almost surely  
\[V_{n}\sim d m_{n}, \quad \mbox{and} \quad f_{n}\sim d  \gamma_{n},
\]
hence convergence (\ref{lemme-Vn}) immediately yields (\ref{thm2-1}). Moreover in the particular case $p=1$, the second convergence~(\ref{raccourci-an}) is exactly (\ref{thm2-2}). Now, for $p\geq 2$, the elementary expansion
\begin{equation}
\label{comppuissance}
x^{p}-y^{p}=(x-y)x^{p-1}\sum_{q=0}^{p-1}\Bigl(\frac{y}{x}\Bigr)^{p-1-q}
\end{equation}
leads to 
\begin{equation}\label{equivap}
\begin{split}
a_{n}(p) & = \bigl(M_{n}^{t}S_{n-1}^{-1}M_{n}\bigr)^{p}-\bigl(M_{n}^{t}S_{n}^{-1}M_{n}\bigr)^{p} \\[0.2cm] 
& = a_{n}(1)V_{n}^{p-1}\sum_{q=0}^{p-1}\Bigl(\frac{V_{n}-a_{n}(1)}{V_{n}}\Bigr)^{p-1-q}.
\end{split}
\end{equation}
Riccati's formula yields
\begin{eqnarray*}
a_{n}(1)&=&
(1-f_n)M_{n}^{t}S_{n-1}^{-1}\Phi_{n}\Phi_{n}^{t}S_{n-1}^{-1}M_{n} \\
&\leq& (1-f_{n})\tr(S_{n-1}^{-1/2}\Phi_{n}\Phi_{n}^{t}S_{n-1}^{-1/2})V_{n} \\
&\leq& f_{n}V_{n},\label{fnVnan}
\end{eqnarray*}
hence $a_{n}(1)=o(V_{n})$ and, by (\ref{equivap}),
 \[
 a_{n}(p)\sim p
a_{n}(1)V_{n}^{p-1} \quad \mathrm{a.s.},
\] 
so that finally
\begin{equation}
\label{akp}
\lim_{n \to \infty}\frac{1}{\log
  d_{n}}\sum_{k=1}^{n}a_{k}(p)=\lim_{n \to \infty}\frac{pd^{p-1}}{\log
  d_{n}}\sum_{k=1}^{n}a_{k}(1)m_{k}^{p-1}=\lambda(p) \quad \mbox{a.s.}\end{equation}
\end{proof}
In most of statistical applications encountered so far, the convergence assumption (\ref{convergence}) is satisfied. However, this technical hypothesis somehow  circumvents the vector problem, which in its full generality is not yet solved. Indeed, (\ref{convergence}) entails that all eigenvalues of the matrix $S_n$ grow to infinity at the same speed $\alpha_n$. Thus, our method of proof as some features in common with the scalar case. Hopefully, we should be able to establish the following result, stated for the moment as a conjecture, without assuming (\ref{convergence}).

\begin{conj}
Let $(\ep_n)$ be a martingale difference sequence satisfying the homogeneity condition
 $\E[\ep_{n+1}^2| \CF_{n}]=\sigma ^2$ a.s. and 
assumption $(H_{p})$ introduced in theorem \ref {thm2}, for some integer $p\geq 1$. 
Then, we have
\begin{eqnarray*}
&&\sum_{k=1}^{n}f_{k}\bigl(M_{k}^{t}S_{k-1}^{-1}M_{k}\bigr)^{p}=\CO\bigl(\log
d_n\bigr)\hspace{1cm}\mbox{a.s.} \\
&&\sum_{k=1}^{n}\bigl(M_{k}^{t}S_{k-1}^{-1}M_{k}\bigr)^{p}-
\bigl(M_{k}^{t}S_{k}^{-1}M_{k}\bigr)^{p}=\CO\bigl(\log
d_n\bigr)\hspace{1cm}\mbox{a.s.}
\end{eqnarray*}
\end{conj}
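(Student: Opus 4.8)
The plan is to drop the exact constants $\ell(p),\lambda(p)$, which encode the isotropy imposed by (\ref{convergence}), and to aim only at the order $\CO(\log d_n)$. I keep the self-normalised variables $V_k=M_k^tS_{k-1}^{-1}M_k$, $h_k=M_k^tS_k^{-1}M_k$, $g_k=\Phi_k^tS_k^{-1}M_k$ and the determinant coefficient $f_k=(d_k-d_{k-1})/d_k$. The first point is that \emph{the second estimate is a consequence of the first}: writing $a_k(p)=V_k^p-h_k^p$, the factorisation (\ref{comppuissance}) with $0\le h_k\le V_k$ gives $a_k(p)\le p\,a_k(1)V_k^{p-1}$, while Riccati's formula $g_k^2=(1-f_k)a_k(1)$ together with Cauchy--Schwarz gives $a_k(1)\le f_kV_k$; hence $a_k(p)\le p\,f_kV_k^{p}$ and
\[
\sum_{k=1}^{n}\bigl(V_k^p-h_k^p\bigr)\le p\sum_{k=1}^{n}f_kV_k^{p}.
\]
Everything thus reduces to the single family of bounds $\Sigma_n(p)\egaldef\sum_{k=1}^{n}f_kV_k^{p}=\CO(\log d_n)$, which I would attack by induction on $p$. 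The base case $p=0$ is immediate, since $1-f_k=d_{k-1}/d_k$ yields $f_k\le-\log(1-f_k)=\log d_k-\log d_{k-1}$, whence $\Sigma_n(0)=\sum_kf_k\le\log d_n-\log d_0$.

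The mechanism available for the inductive step is the exact expansion of $V_{n+1}^{p}$ deduced from $V_{n+1}=h_n+2\ep_{n+1}g_n+\ep_{n+1}^{2}f_n$, telescoped as in (\ref{decompclep}) and (\ref{eq:decomp}) but carried out with the genuine normalisation $\log d_n$ rather than the trace surrogate $\beta_n$ used in Lemma~\ref{lemme_martingales}. Splitting each $\ep_{k+1}^{\ell}$ into a martingale increment plus its conditional mean and invoking Chow's lemma and the strong law of large numbers for martingales, one disposes of all purely stochastic terms; the two surviving drift contributions, $p\sigma^2\sum_kf_kh_k^{p-1}$ and $2p(p-1)\sigma^2\sum_kg_k^2h_k^{p-2}$, are $\CO(\Sigma_n(p-1))=\CO(\log d_n)$ by the induction hypothesis, using $h_k\le V_k$ and $g_k^2\le f_kV_k$. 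This establishes that the telescoped left-hand side $\sum_k(V_k^p-h_k^p)$, together with the alignment sum $\sum_ka_k(1)V_k^{p-1}$ with $a_k(1)=(1-f_k)(\Phi_k^tS_{k-1}^{-1}M_k)^2$, is $\CO(\log d_n)$.

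The snag is that this controls only \emph{alignment} quantities, built from $a_k(1)$, and not the determinant-weighted $\Sigma_n(p)$ itself: there is no reverse inequality, since $a_k(1)$ measures the correlation of $M_k$ with the innovation $\Phi_k$ in the $S_{k-1}^{-1}$-metric and may vanish while $f_kV_k$ stays positive. In Theorem~\ref{thm2} this gap is closed by (\ref{convergence}): isotropy forces $f_k\sim d\,\gamma_k$ and $V_k\sim d\,m_k$, so that $\Sigma_n(p)$ becomes a fixed multiple of the telescoped $\CA_n(p)$, the averaging over directions producing exactly the factor $1/d$ in $\lambda(p)=\tfrac{p}{d}\ell(p)$. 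Without (\ref{convergence}) the eigenvalues of $S_n$ may grow at incomparable rates, no common clock $\alpha_n$ exists, and this scalar reduction breaks down; this is the reason the statement remains a conjecture.

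The real task is therefore an \emph{independent} upper bound $\sum_kf_kV_k^{p}=\CO(\log d_n)$ under anisotropy. The route I would pursue reads $f_k$ as the increment of the log-determinant clock, $f_k\le\log d_k-\log d_{k-1}$, and seeks to bound the Stieltjes sum $\sum_k(\log d_k-\log d_{k-1})V_k^{p}$ by controlling how much log-determinant time the self-normalised vector $S_{k-1}^{-1/2}M_k$ spends at high levels. Concretely, one would look for a Lai--Wei type self-normalised exponential supermartingale giving a tail bound on $\{k\le n:V_k\ge\lambda\}$, weighted by $f_k$, that is summable in $\lambda$ and proportional to $\log d_n$, most plausibly through an eigen-decomposition of $S_n$ handling each direction separately. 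The hard part is exactly this step: to replace the global balance granted by (\ref{convergence}) with a direction-by-direction control robust enough to turn the excursions of an anisotropically normalised vector martingale into the single factor $\log d_n$, uniformly in the unknown relative growth of the eigenvalues of $S_n$.
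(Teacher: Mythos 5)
You should first be clear about the status of this statement: in the paper it is not a theorem but an explicit \emph{conjecture}, stated precisely because the authors' method for theorem~\ref{thm2} hinges on the isotropy hypothesis (\ref{convergence}), which forces all eigenvalues of $S_n$ to grow at the common rate $\alpha_n$ and reduces the vector problem to an essentially scalar one. There is therefore no proof in the paper to compare yours against, and a genuine proof would be new research. Your proposal is, by your own admission, not such a proof. What you do establish is the easy periphery, and it is correct: the reduction of the second bound to the first via $a_k(p)\le p\,a_k(1)V_k^{p-1}$ combined with $a_k(1)\le f_kV_k$ (this is (\ref{comppuissance}) plus Riccati's formula (\ref{gk}) and Cauchy--Schwarz, exactly the paper's own steps around (\ref{equivap})), and the base estimate $\sum_{k=1}^{n}f_k\le\log d_n-\log d_0$. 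But these are also steps the paper already contains; the substance of the conjecture lies elsewhere.

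The genuine gap is the one you name but do not close, and it is worth recording precisely why your induction cannot close it. Your telescoping of $V_{n+1}^{p}$ produces on the left-hand side only the alignment sum $\sum_{k}a_k(p)$, while the drift terms it must absorb require $\sum_{k}f_kV_k^{p-1}=\CO(\log d_n)$, i.e. the \emph{first} bound at level $p-1$ --- a quantity your scheme never outputs at any level, since there is no reverse inequality from $a_k(1)$ back to $f_kV_k$ (indeed $a_k(1)$ vanishes whenever $M_k$ is $S_{k-1}^{-1}$-orthogonal to $\Phi_k$, while $f_kV_k$ need not). So the induction yields the second bound for $p=1$ (essentially a known Lai--Wei estimate) and then stops: at $p=2$ it already demands $\sum_kf_kV_k$. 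Moreover, your claim that Chow's lemma and the strong law ``dispose of all purely stochastic terms'' is unjustified for $p\ge2$: the dominant martingale term $2p\sum_k\ep_{k+1}g_kh_k^{p-1}$ has a square function of order $\sum_kf_kV_k^{2p-1}$, or alternatively $\bigl(\sum_{k\le n}a_k(p)\bigr)\max_{k\le n}V_k^{p-1}$, and both majorants are exactly the uncontrolled objects. Under (\ref{convergence}) the paper tames them through Wei's estimate $m_n=o(\log\beta_n)$; without eigenvalue balance one only has $V_n=\CO\bigl(\lambda_{\max}(S_{n-1})\lambda_{\min}(S_{n-1})^{-1}\log\lambda_{\max}(S_{n-1})\bigr)$, which may be far larger than $\log d_n$. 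Your closing paragraph (self-normalised exponential supermartingales, direction-by-direction eigen-decomposition) is a research programme, not an argument. In short: correct partial reductions, correct diagnosis of the obstruction, but the conjecture is left exactly as open as the paper leaves it.
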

\section{Applications}
\label{ssec:martingalesapplications}
\subsection{Linear regression models}
Theorem~\ref{thm2} is the keystone to understand the asymptotic behavior of cumulative prediction and estimation errors associated to the stochastic regression process given by, for all $n\geq 1$,
\begin{equation}
\label{modele}
X_{n+1}=\theta^{t}\Phi_{n}+\ep_{n+1},
\end{equation}
where $\theta \in \Rset{}^{d}$ is the unknown parameter. The random variables 
$X_{n},\Phi_{n},\ep_{n}$ are the scalar observation, the regression vector and the scalar driven noise, respectively. 
We propose here two applications. The first one concerns stable autoregressive processes
while the second one deals with branching processes with immigration.

\medskip
For a reasonable sequence $(\widehat{\theta}_{n})$ of estimators of $\theta$, we shall investigate the asymptotic performance of $\widehat{\theta}_{n}^{t}\Phi_{n}$, as a predictor of $X_{n+1}$. More precisely, we shall focus on the prediction error $X_{n+1}-\widehat{\theta}_{n}^{t}\Phi_{n}$ and on the estimation error $\widehat{\theta}_{n}-\theta $. In fact, it is more relevant and efficient \cite{GS}  to consider the cumulative prediction and estimation errors defined, for some $p \geq 1$,
as
\begin{eqnarray}
\label{pred}
C_{n}(p)=\sum_{k=0}^{n-1} (X_{k+1}-\widehat{\theta}_{k}^{t}\Phi_{k})^{2p}
\end{eqnarray}
and 
\begin{eqnarray}
\label{est}
G_{n}(p)=\sum_{k=1}^{n} k^{p-1} \|\widehat{\theta}_{k}-\theta\| ^{2p}.
\end{eqnarray}

In the scalar case $d=1$, under suitable moment conditions, asymptotic estimates on $(C_{n}(p))$ and $(G_{n}(p))$ were established in \cite{Bercu} by means of the standard least squares (LS) estimator
\begin{eqnarray}
\label{ls}
\widehat{\theta}_{n}=S_{n-1}^{-1}\sum_{k=1}^{n}\Phi_{k-1}X_{k}.
\end{eqnarray}
It turns out that theorem~\ref{thm2} allows us to improve the results of \cite{Bercu, BercuFort}. 
Up to our knowledge, only partial results in the particular case $p=1$ have been obtained, namely in \cite{Marie,Wei}, where the authors derived the asymptotics of $(C_{n}(p)) $and $(G_{n}(p))$. For the proofs of the strong consistency of the LS estimator for general linear autoregressive model,  we refer the reader to \cite{Lai-Wei2,Wei3, DST}.  Also, one can find in  \cite{DST,Lai-Wei2,Lai-Wei1,Wei3} various results on the asymptotic behavior of the empirical estimator of the covariance associated with process (\ref{modele}).  

\noindent
One might wonder how the convergence of the moments in the ASCLT helps us to deduce the almost sure
asymptotic properties of the sequences $(C_{n}(p))$ and $(G_{n}(p))$. It follows from
(\ref{modele}) and (\ref{ls}) that
\begin{equation}
\label{diff}
\widehat{\theta}_{n}-\theta=S_{n-1}^{-1}M_{n}
\end{equation}
where 
$$
M_{n}= M_{0}+\sum_{k=1}^{n} \Phi_{k-1}\ep_{k}
$$
with $M_{0}=-S\theta$. If
\begin{equation}
\label{pin}
\pi_{n}=\bigl(\theta-\widehat \theta_{n}\bigr)^{t}\Phi_{n}
=X_{n+1}-\widehat \theta_{n}^{t}\Phi_{n}-\ep_{n+1},
\end{equation}
relations (\ref{diff}) and (\ref{pin}) yield
\[\pi_{n}^{2}=M_{n}^{t}S_{n-1}^{-1}\Phi_{n}\Phi_{n}^{t}S_{n-1}^{-1}M_{n}.\]
Applying now Riccati's formula given e.g. in \cite{Marie} pages 96 and 99, it comes
\[S_{n-1}^{-1}=S_{n}^{-1}+(1-f_{n})
S_{n-1}^{-1}\Phi_{n}\Phi_{n}^{t} S_{n-1}^{-1}.\]
Hence, one can write
\begin{eqnarray}
\label{a}
a_{n}(1)= M_{n}^{t}S_{n-1}^{-1}M_{n}
-M_{n}^{t}S_{n}^{-1}M_{n}=(1-f_{n})\pi_{n}^{2}.
\end{eqnarray}
It is often difficult to obtain asymptotic information about the explosion coefficient $f_{n}$. Nevertheless, in our framework,
it is possible to show that $(f_{n})$ converges almost surely to zero. So, the asymptotic behavior of $(G_{n}(p))$ and $(C_{n}(p))$ can be derived from the properties of $\bigl(a_{n}(1)\bigr)^{p}$, under some suitable moment conditions 
on the driven noise $(\ep_{n})$. By the same token, the moments of order $2p$ can be estimated and controlled. 
\begin{cor}
\label{corollairepouran}
Under the assumptions of theorem~\ref{thm2}, one has almost surely
\begin{equation}
\label{res2bis}
\lim_{n \to \infty}\frac{1}{\log
  d_{n}}\sum_{k=1}^{n}\bigl(a_{k}(1)\bigr)^{p}=\left\{\begin{array}{lll} 0 & \mbox{
  if } & p>1,\\
\sigma^2 & \mbox{
  if } & p=1. \end{array} \right.
\end{equation}
\end{cor}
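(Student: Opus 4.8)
The plan is to dispose of the two regimes separately, drawing only on facts already established in the proof of Theorem~\ref{thm2}.

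For $p=1$ there is nothing new to prove: the summand $(a_k(1))^1 = a_k(1) = (M_k^t S_{k-1}^{-1}M_k) - (M_k^t S_k^{-1}M_k)$ is exactly the generic term appearing in (\ref{thm2-2}) for $p=1$. I would therefore just quote (\ref{thm2-2}) and observe that $\lambda(1) = \frac{1}{d}\ell(1) = \sigma^2$ (empty product), which yields the stated value $\sigma^2$.

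For $p>1$ the target is to show that the normalized sum tends to $0$. The starting point is the pointwise estimate obtained via Riccati's formula at the end of the proof of Theorem~\ref{thm2}, namely $0 \le a_n(1) \le f_n V_n$; nonnegativity also follows from (\ref{a}) together with $0 \le f_n < 1$. Raising this bound to the power $p-1$ and multiplying by $a_n(1)$ gives
$$(a_n(1))^p \le a_n(1)\,(f_n V_n)^{p-1} = f_n^{p-1}\,V_n^{p-1}\,a_n(1),$$
so it suffices to control $\sum_{k=1}^n f_k^{p-1} V_k^{p-1} a_k(1)$.

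The key quantitative input is the intermediate relation (\ref{akp}): combined with $V_k \sim d\,m_k$ a.s., it shows that the weights $b_k = a_k(1) V_k^{p-1} \ge 0$ satisfy $\sum_{k=1}^n b_k \sim \frac{\lambda(p)}{p}\log d_n$, which in particular tends to infinity. I would then apply the Toeplitz lemma to the null sequence $f_k^{p-1} \to 0$ (legitimate since $f_n \to 0$ a.s.\ by hypothesis), obtaining $\bigl(\sum_{k=1}^n b_k f_k^{p-1}\bigr)\big/\bigl(\sum_{k=1}^n b_k\bigr) \to 0$ and hence
$$\frac{1}{\log d_n}\sum_{k=1}^n f_k^{p-1} V_k^{p-1} a_k(1) = \frac{\sum_{k=1}^n b_k}{\log d_n}\cdot\frac{\sum_{k=1}^n b_k f_k^{p-1}}{\sum_{k=1}^n b_k} \longrightarrow \frac{\lambda(p)}{p}\cdot 0 = 0.$$
Together with the pointwise bound and the nonnegativity of $(a_k(1))^p$, this forces $(\log d_n)^{-1}\sum_{k=1}^n (a_k(1))^p \to 0$.

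The only point requiring care is the verification that the weights $b_k$ genuinely accumulate to infinity and are of exact order $\log d_n$, so that the Toeplitz lemma applies and the prefactor $\sum_k b_k/\log d_n$ has a finite positive limit; both are guaranteed by (\ref{akp}) and $\lambda(p) > 0$. Everything else reduces to the elementary inequality $a_n(1) \le f_n V_n$ and the already-proven a.s.\ convergence $f_n \to 0$, so I expect no genuine obstacle.
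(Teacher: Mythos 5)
Your proof is correct and follows essentially the same route as the paper: for $p=1$ both simply quote (\ref{thm2-2}) with $\lambda(1)=\sigma^2$, and for $p>1$ both start from the Riccati bound $0\le a_n(1)\le f_nV_n$, peel off a factor $f_k^{p-1}\to 0$, and conclude by a Toeplitz/Kronecker-type argument against a partial sum of exact order $\log d_n$. The only immaterial difference is that the paper bounds $(a_k(1))^p\le (f_kV_k)^p$ and normalizes by $\sum_{k}f_kV_k^p\sim\ell(p)\log d_n$ from (\ref{thm2-1}), whereas you keep one factor of $a_k(1)$ and normalize by $\sum_k a_k(1)V_k^{p-1}\sim \frac{\lambda(p)}{p}\log d_n$ via (\ref{akp}).
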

\begin{proof}
When $p=1$, the convergence (\ref{res2bis}) corresponds precisely to (\ref{thm2-2}). Suppose now that $p>1$. Since
\[
a_n(1)\leq f_nV_n 
\]
and $f_{n}\to 0$ almost surely, we get at once, applying Lemma~\ref{lemme_martingales} and Kronecker's lemma,
\[ 0 \leq \lim_{n \to \infty}\frac{1}{\log
  d_{n}}\sum_{k=1}^{n}\bigl(a_{k}(1)\bigr)^{p} \leq \lim_{n \to
  \infty}\frac{1}{\log
  d_{n}}\sum_{k=1}^{n}\bigl(f_{k}V_{k}\bigr)^{p}=0 \hspace{0.5cm} \hbox{a.s.}\]
\end{proof}
\subsection{Moment estimation, prediction and estimation errors}
\label{ssec:martingalesestimation}
Assume $(\ep_n)$ forms a martingale difference sequence with $\E[\ep_{n+1}^2| \CF_{n}]=\sigma ^2$ a.s., for all $n\ge 1$, and let
\[\Delta_{n}= \frac{1}{n}\sum_{k=1}^{n}\ep_{k}^{2}.\] 
If $\ep_n$ has a conditional moment of order $a>2$, then 
the strong law of large numbers for martingales implies the almost sure convergence of $\Delta_n$  to $\sigma^2$. Under the assumptions of theorem~\ref{thm2}
with $p=1$, the convergence (\ref{res2bis}) leads to the strong consistency of the estimator of $\sigma^{2}$
\[\Gamma_{n}=\frac{1}{n}\sum_{k=0}^{n-1}
\bigl(X_{k+1}-\widehat{\theta}_{k}^{t}\Phi_{k}\bigr)^{2},
\]
since
\begin{eqnarray}
\label{estmom}
\lim_{n \to \infty}\frac{n}{\log
  d_{n}}(\Gamma_{n}-\Delta_{n})=\sigma^{2}\hspace{1cm} \hbox{a.s.}
\end{eqnarray}
Hence, a natural estimator of higher moments of $(\ep_{n})$ can be proposed. For any integer $p \geq 1$, let
\begin{eqnarray}
\label{gamma}
\Gamma_{n}(2q)=\frac{1}{n}\sum_{k=0}^{n-1}
(X_{k+1}-\widehat{\theta}_{k}^{t}\Phi_{k})^{2p}.
\end{eqnarray}
One can readily observe that $n \Gamma_{n}(2p)=C_{n}(p)$. 
For any integer $p \geq 1$, let 
\begin{eqnarray}
\label{delta}
\Delta_{n}(2p)=\frac{1}{n}\sum_{k=0}^{n-1}
\ep_{k}^{2p}.
\end{eqnarray}
Almost sure asymptotic properties of $\Gamma_{n}(2p)$ are given in the next corollary.
\begin{cor}
\label{erreur}
Assume that $(\ep_{n})$ satisfies $(H_{p})$ with $p\geq 1$.
In addition, suppose  that for some integer $1 \leq q \leq p$, 
$\E\bigl[\ep_{n+1}^{2q}\big|\CF_{n}\bigr]=\sigma(2q)$ a.s. Then, 
$\Gamma_{n}(2q)$ is a strongly consistent estimator of $\sigma(2q)$ with
\begin{eqnarray}
\label{estmoment}
\Bigl(\Gamma_{n}(2q)-\Delta_{n}(2q)\Bigr)^{2} =
  \CO\Bigl(\frac{\log n}{n}\Bigr)
\end{eqnarray}
\end{cor}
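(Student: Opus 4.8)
The plan is to reduce the statement to the one-step prediction error together with the a priori $\ell^{2}$ control already furnished by Theorem~\ref{thm2}. First I would use (\ref{modele}) and (\ref{pin}) to write the exact decomposition $X_{k+1}-\widehat{\theta}_{k}^{t}\Phi_{k}=\ep_{k+1}+\pi_{k}$, where $\pi_{k}=(\theta-\widehat{\theta}_{k})^{t}\Phi_{k}$ is $\CF_{k}$-measurable and, by (\ref{a}), satisfies $\pi_{k}^{2}=a_{k}(1)/(1-f_{k})$. The single external ingredient I need is the case $p=1$ of (\ref{res2bis}), namely $\sum_{k=1}^{n}a_{k}(1)\sim\sigma^{2}\log d_{n}$ a.s. Since $f_{k}\to0$ a.s., this delivers the two facts that will be used throughout: $\sum_{k}\pi_{k}^{2}=\CO(\log d_{n})$ a.s. and $\pi_{k}\to0$ a.s.

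Aligning the noise indices so that $\Delta_{n}(2q)=\frac{1}{n}\sum_{k=1}^{n}\ep_{k}^{2q}$, the binomial formula gives
\[
n\bigl(\Gamma_{n}(2q)-\Delta_{n}(2q)\bigr)=\sum_{k=0}^{n-1}\bigl[(\ep_{k+1}+\pi_{k})^{2q}-\ep_{k+1}^{2q}\bigr]=\sum_{j=1}^{2q}C_{2q}^{j}\sum_{k=0}^{n-1}\ep_{k+1}^{2q-j}\pi_{k}^{j}.
\]
For each $j\ge2$ I would bound $|\pi_{k}|^{j}\le(\sup_{k}|\pi_{k}|)^{j-2}\pi_{k}^{2}$, which is legitimate because $\pi_{k}\to0$, and then take conditional expectations: since $(H_{p})$ and H\"older's inequality ensure $\sup_{n}\E\bigl[|\ep_{n+1}|^{2q-j}\big|\CF_{n}\bigr]<\infty$, the conditional-mean part is $\CO(\sum_{k}|\pi_{k}|^{j})=\CO(\log d_{n})$, while the centered fluctuations form a martingale whose increasing process is $\CO(\sum_{k}\pi_{k}^{2})$, so the strong law of large numbers for martingales makes every $j\ge2$ contribution $\CO(\log d_{n})$ a.s.

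The decisive contribution is $j=1$. Splitting $\ep_{k+1}^{2q-1}=e_{k+1}(2q-1)+\sigma_{k}(2q-1)$ as in (\ref{decompbruit}), the martingale part $\sum_{k}e_{k+1}(2q-1)\pi_{k}$ has increasing process $\CO(\sum_{k}\pi_{k}^{2})=\CO(\log d_{n})$, hence is $o(\sqrt{n\log d_{n}})$ by the strong law for martingales; the remaining part is controlled by Cauchy--Schwarz, $\bigl|\sum_{k}\sigma_{k}(2q-1)\pi_{k}\bigr|\le\sup_{k}|\sigma_{k}(2q-1)|\,\sqrt{n}\,\bigl(\sum_{k}\pi_{k}^{2}\bigr)^{1/2}=\CO(\sqrt{n\log d_{n}})$ a.s., using again that $|\sigma_{k}(2q-1)|$ is bounded. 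Collecting the three estimates yields $n(\Gamma_{n}(2q)-\Delta_{n}(2q))=\CO(\sqrt{n\log d_{n}})$, so that $(\Gamma_{n}(2q)-\Delta_{n}(2q))^{2}=\CO(\log d_{n}/n)$; and since (\ref{convergence}) forces $d_{n}\sim\alpha_{n}^{d}\det L$, this is $\CO(\log n/n)$ in the polynomial growth regime relevant here. Strong consistency then follows because $\Delta_{n}(2q)\to\sigma(2q)$ a.s. by the strong law for martingales, which applies since $a>2p\ge2q$.

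The main obstacle is the odd term $\sum_{k}\sigma_{k}(2q-1)\pi_{k}$ appearing in the $j=1$ contribution. For $q\ge2$ the conditional odd moment $\sigma_{k}(2q-1)=\E\bigl[\ep_{k+1}^{2q-1}\big|\CF_{k}\bigr]$ need not vanish, in sharp contrast with the case $q=1$ where it is identically zero because $(\ep_{n})$ is a martingale difference sequence; it is exactly this term that degrades the rate from the $q=1$ behaviour down to $\sqrt{\log n/n}$. Everything therefore hinges on having the sharp bound $\sum_{k}\pi_{k}^{2}=\CO(\log d_{n})$ as an input, so that Cauchy--Schwarz produces precisely the order $\sqrt{n\log d_{n}}$ and nothing larger.
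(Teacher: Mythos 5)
Your overall strategy coincides with the paper's: the same binomial expansion of $(\ep_{k+1}+\pi_k)^{2q}-\ep_{k+1}^{2q}$, the same splitting of each noise power into conditional-mean and centered parts as in (\ref{decompbruit}), and the same final estimate $n^2\bigl(\Gamma_n(2q)-\Delta_n(2q)\bigr)^2=\CO(n\log d_n)$; your Cauchy--Schwarz treatment of the $j=1$ term is correct and in fact more careful than the corresponding claim in the paper. However, there is a genuine gap at your very first step. You assert that $\sum_{k\le n}a_k(1)\sim\sigma^2\log d_n$ together with $f_k\to0$ ``delivers'' $\pi_k\to0$ a.s., and you then lean on $\sup_k|\pi_k|<\infty$ to control every $j\ge2$ contribution, both in the conditional-mean parts via $|\pi_k|^{j}\le(\sup_k|\pi_k|)^{j-2}\pi_k^2$ and in the increasing processes of the martingale parts. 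That implication is false: nonnegative terms whose partial sums are $\sim\sigma^2\log d_n$ need not tend to zero. Since $f_n\to0$ forces $\log d_{n-1}\sim\log d_n$, the partial-sum asymptotics only yield $a_n(1)=o(\log d_n)$, i.e. $\pi_n=o\bigl(\sqrt{\log d_n}\bigr)$; a sequence in which $a_k(1)$ is of order one along a sparse subsequence (say $k=2^m$) is perfectly compatible with $\sum_{k\le n}a_k(1)\sim\sigma^2\log d_n$, and nothing in the hypotheses of Theorem~\ref{thm2} excludes this. So all of your $j\ge2$ estimates rest on a pointwise bound that is neither proved nor available in general.

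The repair is precisely where your proof deviates from the paper's: you restrict yourself to ``the single external ingredient \dots\ the case $p=1$ of (\ref{res2bis})'', whereas the paper also invokes Corollary~\ref{corollairepouran} with exponent $q$, which gives $\sum_{k\le n}(a_k(1))^q=o(\log d_n)$ and hence, since $f_k\to0$, $\sum_{k\le n}\pi_k^{2q}=o(\log d_n)$ a.s. With the two marginal bounds $\sum_k\pi_k^2=\CO(\log d_n)$ and $\sum_k\pi_k^{2q}=o(\log d_n)$ in hand, every intermediate sum $\sum_k|\pi_k|^m$ with $2\le m\le 2q$ is $\CO(\log d_n)$ by H\"older interpolation, namely $\sum_k|\pi_k|^m\le\bigl(\sum_k\pi_k^2\bigr)^{(2q-m)/(2q-2)}\bigl(\sum_k\pi_k^{2q}\bigr)^{(m-2)/(2q-2)}$, and no pointwise control on $\pi_k$ is needed; this is how the paper disposes of the terms you index by $j\ge2$. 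A secondary gloss, which you share with the paper, concerns the martingale parts: for small $j$ the conditional variance of $e_{k+1}(2q-j)$ involves conditional moments of $\ep$ of order $2(2q-j)$, which may exceed the exponent $a>2p$ of $(H_p)$, so strictly speaking one should appeal to Chow's lemma with a reduced exponent, as is done in the proof of Theorem~\ref{thm2}, rather than to the square-integrable strong law.
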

\begin{proof}
We already saw via (\ref{estmom}) that Corollary~\ref{erreur} holds for $q=1$.  Assume now  $q\geq 2$.  By expanding the expression of $\Gamma_{n}(2q)$, equality (\ref{pin}) leads to
\[n\bigl(\Gamma_{n}(2q)-\Delta_{n}(2q)\bigr) =
\sum_{k=0}^{n-1}\pi_{k}^{2q}
+\sum_{\ell=1}^{2q-1}C_{q}^{\ell}\sum_{k=0}^{n-1} 
\pi_{k}^{2q-\ell}\ep_{k+1}^{\ell}.\]
We deduce from (\ref{a}) together with the almost sure convergence of $f_{n}$ to zero and Corollary~\ref{corollairepouran}
that
\[\sum_{k=0}^{n}\pi_{k}^{2q}=o(\log d_{n}) \hspace{1cm} \hbox{a.s.}\]
For all $\ell \in \{1, \ldots, 2q-1\}$, let us write
\[
\sum_{k=0}^{n-1}\pi_{k}^{2q-\ell}\ep_{k+1}^{\ell}=P_n(\ell)+Q_n(\ell)
\]
with
\[
P_n(\ell)= \sum_{k=0}^{n-1}\pi_{k}^{2q-\ell}\sigma_{k}(\ell)
\qquad \mbox{and} \qquad
Q_n(\ell)= \sum_{k=0}^{n-1}\pi_{k}^{2q-\ell}e_{k+1}(\ell),
\]
where $\sigma_n(\ell)=\E\bigl[\ep_{n+1}^{\ell}\big|\CF_{n}\bigr]$ and 
$e_{n+1}(\ell)=\ep_{n+1}^{\ell}-\sigma_n(\ell)$. First, since the moments $\sigma_n(\ell)$ are almost surely bounded, it comes 
\[
|P_n(\ell)|=\CO(\sum_{k=0}^{n-1}
\pi_{k}^{2q-\ell})=\CO(\log d_{n}) \hspace{1cm} \hbox{a.s.}
\]
Moreover, from the estimate
\[
|Q_n(\ell)|^2=\CO(n \log d_{n}) \hspace{1cm} \hbox{a.s.}
\]
we get
\[n^{2}\bigl(\Gamma_{n}(2q)-\Delta_{n}(2q)\bigr)^{2}=\CO\bigl(n \log d_{n})
\quad \mbox{a.s.},\]
which concludes the proof of Corollary~\ref{erreur}.
\end{proof}
It is now possible to deduce from Corollary~\ref{erreur} the asymptotic behavior of $(C_{n}(q))$. Under the assumptions of Corollary~\ref{erreur}, the
convergence~(\ref{estmoment}) yields $C_{n}(q)/n$ converges a.s. to
$\sigma (2q)$. Moreover since the conditional moment of order $a>2p$ of $(\ep_{n})$ is almost surely finite, Chow Lemma leads to 
\begin{eqnarray}
\label{*}
\left| \frac{1}{n}\sum_{k=1}^{n}\ep_{k}^{2q}-\sigma(2q)\right|
=o(n^{c-1}) \hspace{1cm} \hbox{a.s.}
\end{eqnarray}
for all $c$ such that $2pa^{-1}<c<1$. Hence it follows from (\ref{estmoment}) and (\ref{*}) that, if $\log d_{n}=o\bigl(n^{c}\bigr)$, 
\[\left| \frac{1}{n}\ C_{n}(q)-\sigma(2q)\right|^{2}
=o(n^{c-1})\hspace{1cm} \hbox{a.s.}\]
Before enoncing the result on the cumulative estimation error $(G_{n}(p))$, we need another corollary of theorem~\ref{thm2}.
\begin{cor}
\label{avcv}
Under the assumptions of theorem~\ref{thm2}, one has
\begin{eqnarray}
\label{cvmoy}
\lim_{n \to \infty} \frac{1}{\log d_{n}}\sum_{k=1}^{n}
f_{k}\left((\widehat{\theta_{k}}-\theta)^{t}S_{k}(\widehat{\theta_{k}}-\theta)
\right)^{p}=\ell(p) \hspace{0.5cm} \hbox{a.s.}
\end{eqnarray}
In addition, assume that it exists a positive definite symmetric matrix $L$ such that 
\begin{eqnarray}
\label{cv}
\lim_{n \rightarrow +\infty} \frac{1}{n} \ S_{n}=L \hspace{1cm} \hbox{a.s.}
\end{eqnarray}
Then, we have 
\begin{eqnarray}
\label{res5}
\lim_{n \to \infty} \frac{1}{\log
n}\sum_{k=1}^{n}k^{p-1}\left((\widehat{\theta_{k}}-
\theta)^{t}L(\widehat{\theta_{k}}-\theta)\right)^{p}= \ell(p) \hspace{0.5cm} \hbox{a.s.}
\end{eqnarray}
\end{cor}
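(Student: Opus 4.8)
The plan is to derive both limits from Theorem~\ref{thm2}, via the identity $\widehat{\theta}_{n}-\theta=S_{n-1}^{-1}M_{n}$ of (\ref{diff}). Throughout I write $V_{k}=M_{k}^{t}S_{k-1}^{-1}M_{k}$ and $g_{k}=M_{k}^{t}S_{k-1}^{-1}\Phi_{k}$, as in Lemma~\ref{lemme_martingales}.

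For (\ref{cvmoy}), I would first note, using $S_{k}=S_{k-1}+\Phi_{k}\Phi_{k}^{t}$ from (\ref{Sn}) together with $\widehat{\theta}_{k}-\theta=S_{k-1}^{-1}M_{k}$, that
\[
(\widehat{\theta}_{k}-\theta)^{t}S_{k}(\widehat{\theta}_{k}-\theta)
=M_{k}^{t}S_{k-1}^{-1}S_{k}S_{k-1}^{-1}M_{k}=V_{k}+g_{k}^{2}.
\]
Riccati's formula (\ref{gk}), namely $g_{k}^{2}=(1-f_{k})a_{k}(1)$, combined with the bound $a_{k}(1)\leq f_{k}V_{k}$ obtained at the end of the proof of Theorem~\ref{thm2}, gives $g_{k}^{2}\leq f_{k}V_{k}=o(V_{k})$ a.s. because $f_{k}\to 0$. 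Hence $(\widehat{\theta}_{k}-\theta)^{t}S_{k}(\widehat{\theta}_{k}-\theta)=V_{k}\bigl(1+o(1)\bigr)$ a.s., and raising to the power $p$ yields $\bigl((\widehat{\theta}_{k}-\theta)^{t}S_{k}(\widehat{\theta}_{k}-\theta)\bigr)^{p}=V_{k}^{p}\bigl(1+o(1)\bigr)$. Since the weights $f_{k}V_{k}^{p}$ are nonnegative and $(\log d_{n})^{-1}\sum_{k=1}^{n}f_{k}V_{k}^{p}\to\ell(p)$ by (\ref{thm2-1}), a Ces\`aro (Toeplitz) argument absorbs the $o(1)$ factor and produces (\ref{cvmoy}).

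For (\ref{res5}), I would exploit that the normalization (\ref{cv}) is exactly (\ref{convergence}) with $\alpha_{n}=n$. Setting $\beta_{n}=\tr(L^{-1/2}S_{n}L^{-1/2})$, the limit $n^{-1}S_{n}\to L$ gives $\beta_{n}\sim nd$, hence $\gamma_{n}\sim 1/n$; by the relation $f_{n}\sim d\gamma_{n}$ recorded in the proof of Theorem~\ref{thm2} this yields $f_{n}\sim d/n$, and moreover $\log d_{n}\sim d\log n$. Writing $n^{-1}S_{n}=L^{1/2}(I+R_{n})L^{1/2}$ with a symmetric remainder $R_{n}\to 0$, the spectral estimate already used for (\ref{simpliV}) gives
\[
(\widehat{\theta}_{k}-\theta)^{t}S_{k}(\widehat{\theta}_{k}-\theta)
=k\,(\widehat{\theta}_{k}-\theta)^{t}L(\widehat{\theta}_{k}-\theta)\,\bigl(1+o(1)\bigr)\quad\mbox{a.s.}
\]
Combining these facts, the generic summand satisfies
\[
k^{p-1}\bigl((\widehat{\theta}_{k}-\theta)^{t}L(\widehat{\theta}_{k}-\theta)\bigr)^{p}
=\frac{f_{k}}{d}\bigl((\widehat{\theta}_{k}-\theta)^{t}S_{k}(\widehat{\theta}_{k}-\theta)\bigr)^{p}\bigl(1+o(1)\bigr)\quad\mbox{a.s.}
\]
Summing, dividing by $\log n\sim d^{-1}\log d_{n}$, and invoking (\ref{cvmoy}) together with the same Ces\`aro argument then gives (\ref{res5}).

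The main obstacle is purely the rigorous transfer of these pointwise almost sure equivalences to the normalized sums: one must verify that replacing each summand by an asymptotically equivalent one leaves the limit unchanged. This is legitimate because all the weights involved ($f_{k}V_{k}^{p}$ and their analogues) are nonnegative and the reference averages converge to the finite positive constant $\ell(p)=d\sigma^{2p}\prod_{j=1}^{p-1}(d+2j)$, so a multiplicative $\bigl(1+o(1)\bigr)$ error is killed in the Ces\`aro mean; the supporting spectral estimates controlling $(\widehat{\theta}_{k}-\theta)^{t}(n^{-1}S_{n}-L)(\widehat{\theta}_{k}-\theta)$ are of exactly the type already carried out in the proof of Theorem~\ref{thm2}, so no genuinely new difficulty arises beyond the bookkeeping.
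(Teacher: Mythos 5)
Your argument for (\ref{cvmoy}) is correct and is essentially the paper's own: the identity $(\widehat{\theta}_{k}-\theta)^{t}S_{k}(\widehat{\theta}_{k}-\theta)=V_{k}+g_{k}^{2}$, the bound $g_{k}^{2}\leq f_{k}V_{k}=o(V_{k})$ from Riccati's formula, and (\ref{thm2-1}) combined with a Toeplitz--Kronecker argument.

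Your argument for (\ref{res5}), however, has a genuine gap at the step ``$\beta_{n}\sim nd$, hence $\gamma_{n}\sim 1/n$'' (equivalently $f_{n}\sim d/n$). This implication is false: the hypothesis (\ref{cv}) controls only Ces\`aro averages of the increments, namely $\Sigma_{n}=\sum_{k=1}^{n}\beta_{k}\gamma_{k}=\sum_{k=1}^{n}\Phi_{k}^{t}L^{-1}\Phi_{k}\sim nd$, but says nothing about the individual terms $\Phi_{n}^{t}L^{-1}\Phi_{n}=\beta_{n}-\beta_{n-1}$. Already in dimension $d=1$ with $L=1$, taking $\Phi_{n}^{2}$ to alternate between $0$ and $2$ gives $n^{-1}S_{n}\to 1$ and $f_{n}\to 0$, yet $f_{n}$ oscillates between $0$ and roughly $2/n$, so $f_{n}\not\sim 1/n$. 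Since your key substitution
\[
k^{p-1}\bigl((\widehat{\theta}_{k}-\theta)^{t}L(\widehat{\theta}_{k}-\theta)\bigr)^{p}
=\frac{f_{k}}{d}\bigl((\widehat{\theta}_{k}-\theta)^{t}S_{k}(\widehat{\theta}_{k}-\theta)\bigr)^{p}\bigl(1+o(1)\bigr)
\]
is exactly equivalent to $f_{k}\sim d/k$, the reduction of (\ref{res5}) to (\ref{cvmoy}) collapses; and no Toeplitz argument with nonnegative weights can repair it, because the defect is the replacement of the weight sequence $(f_{k})$ by $(d/k)$, not the multiplicative $1+o(1)$ error. What is actually needed is the convergence $\frac{1}{\log n}\sum_{k=1}^{n}m_{k}^{p}/\beta_{k}\to\ell(p)/d^{p+1}$, whereas Lemma~\ref{lemme_martingales} provides the corresponding limit with weights $\gamma_{k}$ in place of $d/\beta_{k}$; passing from one family of weights to the other is the real content of this half of the corollary. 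The paper does it with an Abel (summation by parts) transform, writing $\sum_{k=1}^{n}\gamma_{k}m_{k}^{p}$ as $d\sum_{k=1}^{n-1}m_{k}^{p}/\beta_{k}$ plus boundary terms involving $\Sigma_{n}-nd$ plus a remainder $r_{n}$, and then proving $r_{n}=o(\log n)$ a.s.\ by means of the recursion (\ref{decompositioncasp}) and Lemma~\ref{lemme_martingales}. Your proposal is missing an argument of this type, and without one the passage from (\ref{cvmoy}) to (\ref{res5}) does not go through.
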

\begin{rem}
Since $L$ is positive definite, (\ref{res5}) immediately yields 
\[
G_{n}(p)=\CO\bigl(\log n\bigr) \hspace{1cm} \mbox{a.s.}
\]
\end{rem}

\begin{proof}
From the definitions of $S_{n}$ and of $\widehat \theta_{n}$, it is easy to see that 
\[\bigl(\widehat \theta_{n}-\theta
  \bigr)^{t}S_{n}\bigl(\widehat \theta_{n}-\theta
  \bigr)=V_{n}+g_{n}^{2}.\]
Hence, it follows from the convergence~(\ref{thm2-1}) and from the convergence of the explosion coefficient $(f_{n})$ to zero, together with Kronecker's Lemma that almost surely
\[\lim_{n \to \infty}\frac{1}{\log
  d_{n}}\sum_{k=1}^{n}f_{k}\Bigl(\bigl(\widehat \theta_{k}-\theta
  \bigr)^{t}S_{k}\bigl(\widehat \theta_{k}-\theta
  \bigr)\Bigr)^{p}=\lim_{n \to \infty}\frac{1}{\log
  d_{n}}\sum_{k=1}^{n}f_{k}V_{k}^{p}=\ell(p).\]
Then, the convergence~(\ref{cvmoy}) is a straight forward consequence of theorem~\ref{thm2}. 
Using  the formel object $\sqrt L$, we get 
\[\Bigl(\bigl(\hat
\theta_{n}-\theta\bigr)^{t}L\bigl(\hat
\theta_{n}-\theta\bigr)\Bigr)^{p}\sim \bigl(n^{-2}m_{n}\beta_{n}\bigr)^{p}\sim
\bigl(d^{2}m_{n}\beta_{n}^{-1}\bigr)^{p} \quad \mbox{a.s.}\]
Thus,
\[\lim_{n \to \infty} \frac{1}{\log n} \sum_{k=1}^{n}k^{p-1}\Bigl(\bigl(\hat
\theta_{k}-\theta\bigr)^{t}L\bigl(\hat
\theta_{k}-\theta\bigr)\Bigr)^{p}=\lim_{n \to
\infty}\frac{d^{p+1}}{\log n}\sum_{k=1}^{n}\frac{m_{k}^{p}}{\beta_{k}}
\quad \mbox{a.s.}
\] 
The classical Abel transform gives the decomposition
\begin{equation}
\label{abel}
\sum_{k=1}^{n}\gamma_{k}m_{k}^{p}=\frac{m_{n}^{p}}{\beta_{n}}
\bigl(\Sigma_{n}-d(n-1)\bigr)-\frac{m_{1}^{p}}{\beta_{0}} 
\Sigma_{0}+r_{n}+d\sum_{k=1}^{n-1}\frac{m_{k}^{p}}{\beta_{k}} ,
\end{equation}
with 
\[
\Sigma_{n} = \sum_{k=1}^{n}\beta_{k}\gamma_{k} =
\sum_{k=1}^{n}\Phi_{k}^{t}L^{-1}\Phi_{k} \sim \beta_{n}
\] 
and
\[
r_{n} = \sum_{k=1}^{n-1}\Bigl(\frac{m_{k}^{p}}{\beta_{k}}-\frac{m_{k+1}^{p}}
{\beta_{k+1}}\Bigr)\bigl(\Sigma_{k}-kd\bigl).
\] 
Moreover,
\[\frac{m_{n}^{p}}{\beta_{n}}\bigl(\Sigma_{n}-d(n-1)\bigr)- 
\frac{m_{1}^{p}}{\beta_{0}}\Sigma_{0}=o\bigl(\log d_{n}\bigr) \quad
\mbox{a.s.}\]
So, it only remains to prove that $r_{n}=o\bigl(\log n\bigr)$
a.s. Indeed, lemma~\ref{lemme_martingales} yields
\[\lim_{n \to \infty}\frac{1}{\log
  n}\sum_{k=1}^{n}\frac{m_{k}^{p}}{\beta_{k}}=\frac{1}{d}\lim_{n \to \infty}
  \frac{1}{\log n}\sum_{k=1}^{n}\gamma_{k}m_{k}^{p}=\frac{\ell(p)}{d^{p+1}}
  \quad \mbox{a.s.}\]
Then, splitting $r_{n}$ into two terms
\begin{equation}
\label{rn}
r_{n}=\sum_{k=1}^{n-1}\frac{\Sigma_{k}-kd}{\beta_{k}}
\bigl(m_{k}^{p}-m_{k+1}^{p}\bigr)+\sum_{k=1}^{n-1}
\frac{\Sigma_{k}-kd}{\beta_{k}}\gamma_{k+1}m_{k+1}^{p},\end{equation}
and using the proof of theorem~\ref{thm2} together with (\ref{decompositioncasp}), we obtain
\[
\lim_{n \to \infty}\frac{1}{\log
  n}\sum_{k=1}^{n-1}\frac{\Sigma_{k}-kd}{\beta_{k}}
\Bigl(\beta_{k}^{-p}\bigl(\beta_{k}^{p}-\beta_{k-1}^{p}
\bigr)m_{k}^{p}-w_{k+1}-b_{k+1}\bigl)=0 \quad \mbox{a.s.}
\]
The second term is almost surely $o(\log n)$: this is a mere consequence of Lemma~\ref{lemme_martingales} and  of the a.s. convergence of $\Sigma_{n}-nd/\beta_{n}$ to zero. Finally, we have
\[\lim_{n\to \infty}\frac{1}{\log n} \sum_{k=1}^{n}k^{p-1}\Bigl(\bigl(\hat
\theta_{k}-\theta\bigr)^{t}L\bigl(\hat
\theta_{k}-\theta\bigr)\Bigr)^{p}=\ell(p) \quad \mbox{a.s.}\]
\end{proof}
We shall apply now these asymptotic properties to autoregressive
processes and  to  branching processes with immigration, which are both particular cases of the general stochastic regression process (\ref{modele}).

\subsection{ The linear autoregressive process} 
\label{autoregression}
The linear autoregressive process is defined for all $n \geq 1$ by
\begin{equation}
\label{autoregressiflineaire}
X_{n+1}=\sum_{k=1}^{d}\theta_{k}X_{n-k+1} + \varepsilon_{n+1}.
\end{equation}
Let  $C$ denote the companion matrix associated with $(X_n)$
\begin{displaymath}
\label{matcomp}
C=\left(\begin{array}{ccccc}
        \theta_{1} & \theta_{2} & \ldots & \theta_{d-1} & \theta_{d} \\
        1 & 0  & \ldots & \ldots & 0 \\
        0 & 1  & 0 &\ldots & 0 \\
 \vdots & \ddots & \ddots & \ddots & \vdots \\
  0  & \ldots & 0 & 1 & 0 \\
\end{array}\right).
\end{displaymath}
We shall focus our attention on the stable case which means that we assume that
$\rho(C)<1$ where $\rho(C)$ is the spectral radius of the matrix $C$.
In addition, we also assume that $(\ep_{n})$ is a martingale difference sequence
which satisfies $(H_{p})$ with $p\geq 1$.
If $\E[\ep_{n+1}^2| \CF_{n}]=\sigma ^2$ a.s. and the matrix
\begin{displaymath}
\label{Gamma}
 \Gamma=\sigma^{2} \left(\begin{array}{cccc}
        1 & 0 & \ldots & 0 \\
        0 & 0 & \ldots & 0 \\
       \vdots & \vdots & \vdots & \vdots \\
        0 & 0 & \ldots & 0 \\
\end{array}\right),
\end{displaymath}
then convergence (\ref{cv}) holds with the limiting matrix
$L$ given by
\begin{eqnarray}
\label{L}
L=\sum_{k=0}^{\infty}C^{k}\Gamma(C^{t})^{k}.
\end{eqnarray}
Moreover, one can immediately see that the matrix $L$ is positive definite \cite{Marie, DST}.  Then we are in a position to state our following result.
\begin{cor}
\label{erreurar}
Assume that $(\ep_{n})$ satisfies $(H_{p})$ with $p\geq 1$.
In addition, suppose  that for some integer $1 \leq q \leq p$, 
$\E\bigl[\ep_{n+1}^{2q}\big|\CF_{n}\bigr]=\sigma(2q)$ a.s. Then, 
$\Gamma_{n}(2q)$ is a strongly consistent estimator of $\sigma(2q)$ with
\begin{eqnarray}
\label{estmomentar}
\Bigl(\Gamma_{n}(2q)-\Delta_{n}(2q)\Bigr)^{2} =
  \CO\Bigl(\frac{\log n}{n}\Bigr)\quad \mbox{a.s.}
\end{eqnarray}
Moreover, we also have
\begin{eqnarray}
\label{lfqpar}
\lim_{n \to \infty} \frac{1}{\log
n}\sum_{k=1}^{n}k^{p-1}\left((\widehat{\theta}_{k}-
\theta)^{t}L(\widehat{\theta}_{k}-\theta)\right)^{p}= \ell(p) \quad
\mbox{a.s.}
\end{eqnarray}
\end{cor}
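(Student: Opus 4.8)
The plan is to read the stable autoregression~(\ref{autoregressiflineaire}) as a particular case of the general stochastic regression~(\ref{modele}) and then to apply Corollaries~\ref{erreur} and~\ref{avcv} almost verbatim. First I would set $\Phi_{n}=(X_{n},X_{n-1},\dots,X_{n-d+1})^{t}$ and $\theta=(\theta_{1},\dots,\theta_{d})^{t}$, so that~(\ref{autoregressiflineaire}) becomes $X_{n+1}=\theta^{t}\Phi_{n}+\ep_{n+1}$, which is exactly~(\ref{modele}), while the state obeys the linear recursion $\Phi_{n+1}=C\Phi_{n}+\ep_{n+1}e_{1}$, where $e_{1}$ is the first canonical vector and $C$ the companion matrix. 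Everything then reduces to checking that the single substantive hypothesis of theorem~\ref{thm2}, namely the convergence~(\ref{convergence}), holds with the natural normalisation $\alpha_{n}=n$ and the matrix $L$ of~(\ref{L}); the moment assumption $(H_{p})$ and the constancy of the conditional $2q$-th moment are granted directly.

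The only real work lies in establishing~(\ref{cv}), i.e. $n^{-1}S_{n}\to L$ almost surely. Since $\rho(C)<1$, iterating the state recursion gives the geometrically convergent expansion $\Phi_{n}=\sum_{j\ge 0}C^{j}e_{1}\,\ep_{n-j}$ up to a contribution of the initial conditions decaying like $\rho(C)^{n}$, which forces $L$ to be the unique solution of the discrete Lyapunov equation $L=CLC^{t}+\Gamma$ with $\Gamma=\sigma^{2}e_{1}e_{1}^{t}$, that is $L=\sum_{k\ge 0}C^{k}\Gamma(C^{t})^{k}$ as in~(\ref{L}). I would then prove the almost sure convergence of $n^{-1}\sum_{k}\Phi_{k}\Phi_{k}^{t}$ to $L$ entrywise: the diagonal averages reduce to $n^{-1}\sum_{k}\ep_{k}^{2}\to\sigma^{2}$ by the strong law of large numbers for martingales under $(H_{p})$, while the off-diagonal cross averages $n^{-1}\sum_{k}\ep_{k}\ep_{k'}$ with $k\neq k'$ are themselves martingale transforms and vanish a.s., the decay $\rho(C)<1$ ensuring uniform control of the tails. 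The extra deterministic term $S/n\to 0$ is harmless, and $L$ is positive definite (hence symmetric and invertible) by the references already cited, so~(\ref{convergence}) indeed holds with $\alpha_{n}=n$.

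It remains to collect the conclusions. As $\alpha_{n}=n$ increases to infinity with $\alpha_{n}\sim\alpha_{n-1}$, the remark following theorem~\ref{thm2} gives $f_{n}\to 0$ a.s., so we are squarely within the hypotheses of theorem~\ref{thm2} and therefore of Corollaries~\ref{erreur} and~\ref{avcv}. The bound~(\ref{estmomentar}) is then exactly~(\ref{estmoment}) transported to this model, where $\beta_{n}\sim nd$ yields $\log d_{n}\sim d\log n$ and hence the $\CO(\log n/n)$ rate, while~(\ref{lfqpar}) is precisely~(\ref{res5}) with the positive definite $L$ of~(\ref{L}). The main obstacle is thus concentrated in the almost sure convergence of the empirical covariance matrix of the stable autoregressive process; once that ergodic statement is secured, the corollary follows by a direct transfer of the two earlier corollaries.
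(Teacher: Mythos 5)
Your proposal matches the paper's own (largely implicit) proof: the paper likewise reads the stable autoregression~(\ref{autoregressiflineaire}) as the special case of~(\ref{modele}) with companion matrix $C$, asserts that~(\ref{cv}) holds with $L=\sum_{k\ge 0}C^{k}\Gamma(C^{t})^{k}$ positive definite (citing the literature rather than reproving it), and then obtains~(\ref{estmomentar}) and~(\ref{lfqpar}) as direct transfers of Corollaries~\ref{erreur} and~\ref{avcv}, exactly as you do. Your additional sketch of the almost sure convergence of the empirical covariance (Lyapunov equation plus martingale strong law with geometric tail control) is sound and simply fills in the step the paper delegates to its references.
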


\subsection{A branching process with immigration}

\subsubsection{Estimation of the mean}

Branching process with immigration play an increasingly important role in
statistical physics, computational biology and evolutionary theory.
The concept of immigration is related to situation
in which the population can be enriched by exogenous contributions.
The branching process with immigration $(X_n)$ is given for all $n \geq 1$ by
the recursive relation
\begin{equation}
\label{branchement}
X_{n+1}=\sum_{k=1}^{X_{n}}Y_{n,k}+I_{n+1},
\end{equation}
where $(Y_{n,k})$ and $(I_n)$ are two independent sequences
of i.i.d. nonnegative, 
integer-valued random variables. The initial ancestor is
$X_{0}=1$. The distribution of $(Y_{n,k})$ is commonly
called the offspring distribution, while that of 
of $(I_n)$ is known as the immigration distribution.

Define
\begin{alignat*}{2}
\E[Y_{n,k}] & = m, & \qquad \E[I_{n}]  & = \lambda ,  \\
\var[Y_{n,k}] & = \sigma^{2} , & \qquad  \var[I_{n}]  & = b^{2}.
\end{alignat*}
We are interested in the estimation of all the parameters
$m,\lambda,\sigma^2,b^2$.
Relation (\ref{branchement}) may be rewritten as the autoregressive form
\begin{equation}
\label{branchement2}
X_{n+1}= m X_{n} + \lambda + \ep_{n+1}
\end{equation}
where $\ep_{n+1}=X_{n+1}-mX_n - \lambda$. 
Consequently, the branching process with immigration 
is a particular case of the stochastic regression process given by (\ref{modele}) with $\Phi_{n}^{t}= (X_{n},1)$ and 
$\theta^{t}= (m,\lambda)$.
However, one can observe that the situation is a little bit more tricky
as $(\ep_{n})$ is a martingale difference sequence with unbounded
conditional variance
\[\E\bigl[\ep_{n+1}^{2}| \CF_{n}\bigr]=\sigma^{2}X_{n}+b^{2}.\]
To circumvent this technical difficulty, we introduce the following 
regression process
\[
Z_{n+1} = \theta^{t}\Psi_{n} + \xi_{n+1},
\]
where the random variables $Z_{n+1}$, $\Psi_{n}$ and $\xi_{n}$ are given by
\[
Z_{n+1}= c_{n}^{-1/2}X_{n+1}, \quad \Psi_{n}= c_{n}^{-1/2}\Phi_{n}, \quad
\xi_{n+1}= c_{n}^{-1/2} \ep_{n+1} ,
\]
with
$$c_{n}= X_{n}+1.$$ 
Herafter, $(\xi_{n})$ is a martingale difference sequence with bounded conditional variance
$\E[\xi_{n+1}^{2}| \CF_{n}]\leq \sigma^{2}+b^{2}$ a.s.
The mean vector  $\theta^{t}= (m,\lambda)$ will be estimated by the conditional least-squares estimator
\[
\widehat \theta_{n}= S_{n}^{-1}\sum_{k=1}^{n}c_{k}^{-1}\Phi_{k}X_k ,
\]
where
\[
S_{n}= I_2+\sum_{k=0}^{n}c_{k}^{-1}\Phi_{k}\Phi_{k}^{t}.
\]
In the subcritical case $m<1$, Wei and Winnicki \cite{WeiWinnicki} have shown the almost sure convergence
$$
\lim_{n \to \infty}\frac{1}{n}S_{n}
=L \quad
\mbox{a.s.}
$$
where the limiting matrix $L$ is given by
\begin{displaymath}
\label{Lbranc}
L = \left(\begin{array}{cc}
        \E\Big[\frac{X^{2}}{X+1}\Bigl] & \E\Big[\frac{X}{X+1}\Bigl]\\
                                       &                            \\
        \E\Big[\frac{X}{X+1}\Bigl]     & \E\Big[\frac{1}{X+1}\Bigl]\\
 \end{array}\right).
\end{displaymath}
The notation $X$ stands for a random variable sharing the same distribution as the stationary distribution
of the Markov chain $(X_{n})$. Consequently, the matrix $L$ is positive definite and  the following result holds.
\begin{cor}
\label{erreurbgwi}
Assume that $(\ep_{n})$ satisfies $(H_{p})$ with $p\geq 1$.
In addition, suppose  that for some integer $1 \leq q \leq p$, 
$\E\bigl[\ep_{n+1}^{2q}\big|\CF_{n}\bigr]=\sigma(2q)$ a.s. Then, 
$\Gamma_{n}(2q)$ is a strongly consistent estimator of $\sigma(2q)$ with
\begin{eqnarray}
\label{estmomentbgwi}
\Bigl(\Gamma_{n}(2q)-\Delta_{n}(2q)\Bigr)^{2} =
  \CO\Bigl(\frac{\log n}{n}\Bigr)\quad \mbox{a.s.}
\end{eqnarray}
Moreover, we also have
\begin{eqnarray}
\label{lfqpbgwi}
\lim_{n \to \infty} \frac{1}{\log
n}\sum_{k=1}^{n}k^{p-1}\left((\widehat{\theta}_{k}-
\theta)^{t}L(\widehat{\theta}_{k}-\theta)\right)^{p}= \ell(p) \quad
\mbox{a.s.}
\end{eqnarray}
\end{cor}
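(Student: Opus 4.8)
The two displayed conclusions are, respectively, nothing but the specialisations of Corollary~\ref{erreur} and of the convergence~(\ref{res5}) of Corollary~\ref{avcv} to the branching process~(\ref{branchement}), once the latter is read as the stochastic regression~(\ref{modele}) with $\Phi_{n}^{t}=(X_{n},1)$ and $\theta^{t}=(m,\lambda)$. Accordingly, the plan is \emph{not} to reprove any analytic content, but simply to check that, after the normalisation introduced just before the statement, the model falls within the scope of Theorem~\ref{thm2}; the two corollaries then apply verbatim. Concretely, I would first reduce to the regression $Z_{n+1}=\theta^{t}\Psi_{n}+\xi_{n+1}$ with $\Psi_{n}=c_{n}^{-1/2}\Phi_{n}$, $\xi_{n+1}=c_{n}^{-1/2}\ep_{n+1}$ and $c_{n}=X_{n}+1$, so that $S_{n}=I_{2}+\sum_{k\le n}\Psi_{k}\Psi_{k}^{t}$ is precisely the Gram matrix attached to $(\Psi_{n})$.

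The first task is hypothesis $(H_{p})$ for the normalised innovation. Writing $\ep_{n+1}=\sum_{k=1}^{X_{n}}(Y_{n,k}-m)+(I_{n+1}-\lambda)$, the conditional moment $\E[|\ep_{n+1}|^{a}|\CF_{n}]$ is controlled by a Rosenthal-type inequality for the sum of the $X_{n}$ independent centred variables $Y_{n,k}-m$; the dominant term is of order $X_{n}^{a/2}$, whence $\E[|\xi_{n+1}|^{a}|\CF_{n}]=c_{n}^{-a/2}\E[|\ep_{n+1}|^{a}|\CF_{n}]$ stays almost surely bounded as soon as the offspring and immigration laws possess a moment of order $a>2p$. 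This is exactly what makes the normalisation by $c_{n}$ worthwhile: it turns the unbounded conditional variance $\E[\ep_{n+1}^{2}|\CF_{n}]=\sigma^{2}X_{n}+b^{2}$ into the bounded quantity $\E[\xi_{n+1}^{2}|\CF_{n}]\le\sigma^{2}+b^{2}$.

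The second task is the normalisation hypothesis~(\ref{cv}). In the subcritical regime $m<1$ the chain $(X_{n})$ is positive recurrent, and Wei and Winnicki~\cite{WeiWinnicki} establish $n^{-1}S_{n}\to L$ almost surely with $L$ positive definite; this is~(\ref{cv}) with $\alpha_{n}=n$ and forces the explosion coefficient $f_{n}$ to zero. With $(H_{p})$ and~(\ref{cv}) secured, Corollary~\ref{avcv} immediately yields~(\ref{lfqpbgwi}), and Corollary~\ref{erreur} yields the consistency of $\Gamma_{n}(2q)$ together with the control~(\ref{estmomentbgwi}), the strong consistency following from $f_{n}\to0$ and the $\CO(n^{-1}\log n)$ estimate on the prediction--noise discrepancy.

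The step I expect to be the \emph{real obstacle} is that, even after normalising, $\E[\xi_{n+1}^{2}|\CF_{n}]=(\sigma^{2}X_{n}+b^{2})/(X_{n}+1)$ remains genuinely non-constant, so the strict homogeneity underlying Theorem~\ref{thm2} and the identity $\langle M\rangle_{n}=\sigma^{2}S_{n-1}$ both fail. The route I would take is to observe that the induction of Lemma~\ref{lemme_martingales} uses the conditional variance only through empirical averages of the form $\sum_{k}\gamma_{k}\,\E[\xi_{k+1}^{2}|\CF_{k}]\,m_{k}^{p-1}$, and that positive recurrence of $(X_{n})$ forces such averages to converge, by the ergodic theorem, to an expectation under the stationary law. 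Propagating this limiting constant through the recursion~(\ref{decompMn}) in place of $\sigma^{2}$ reproduces~(\ref{lfqpbgwi}) and~(\ref{estmomentbgwi}); making that substitution rigorous inside the induction, and in particular verifying that the limiting constant is the one implicit in $\ell(p)$, is the delicate point, the remainder being a transcription of the arguments already given for the general and autoregressive cases.
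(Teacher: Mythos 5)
Your first two paragraphs coincide with what the paper actually does: the paper contains no proof of Corollary~\ref{erreurbgwi} at all --- it introduces the normalized regression $Z_{n+1}=\theta^{t}\Psi_{n}+\xi_{n+1}$, cites Wei and Winnicki for $n^{-1}S_{n}\to L$ with $L$ positive definite, and then states the corollary as an immediate consequence of Corollary~\ref{erreur} and Corollary~\ref{avcv}. Your Rosenthal-type verification of $(H_{p})$ for $(\xi_{n})$ is a detail the paper does not even record, so up to that point you are, if anything, more careful than the source.

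The obstacle you isolate in your last paragraph is genuine, and it is a gap in the paper itself, not merely in your attempt: $\E[\xi_{n+1}^{2}|\CF_{n}]=(\sigma^{2}X_{n}+b^{2})/(X_{n}+1)$ is constant only when $\sigma^{2}=b^{2}$, so the homogeneity hypothesis of Theorem~\ref{thm2} --- and with it the identity $\langle M\rangle_{n}=\sigma^{2}S_{n-1}$ on which the whole induction of Lemma~\ref{lemme_martingales} rests --- fails for the normalized noise (the hypotheses as literally written, on $(\ep_{n})$, fail as well, since $\E[\ep_{n+1}^{2}|\CF_{n}]=\sigma^{2}X_{n}+b^{2}$ is unbounded). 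However, your proposed repair does not close this gap: replacing $\sigma^{2}$ by the ergodic average of the conditional variance treats as a scalar a quantity that enters the problem as a matrix. Indeed $\langle M\rangle_{n}=\sum_{k}v_{k-1}\Psi_{k-1}\Psi_{k-1}^{t}$ with $v_{k}=(\sigma^{2}X_{k}+b^{2})/(X_{k}+1)$, and its normalized limit $\widetilde{L}=\E\bigl[v(X)\,\Psi\Psi^{t}\bigr]$ (expectation under the stationary law, $\Psi=(X,1)^{t}/\sqrt{X+1}$) is in general \emph{not} proportional to $L=\E\bigl[\Psi\Psi^{t}\bigr]$. Hence $M_{n}^{t}S_{n-1}^{-1}M_{n}$ is asymptotically distributed as $Z^{t}L^{-1}Z$ with $Z\sim\CN(0,\widetilde{L})$, a weighted sum of chi-squares whose weights are the eigenvalues of $L^{-1}\widetilde{L}$; its moments agree with $\ell(p)$ only if $\widetilde{L}=\sigma^{2}L$. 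Already for $p=1$ your scheme would produce $\tr\bigl(L^{-1}\widetilde{L}\bigr)$, which generically differs from $\ell(1)=2\sigma^{2}$. So the scalar ergodic substitution discards precisely the directional information that matters: to make the corollary correct one must either restrict to $\sigma^{2}=b^{2}$ (where homogeneity holds exactly and the paper's route goes through verbatim) or restate the limit in terms of $L^{-1}\widetilde{L}$ --- and, as far as this report goes, the paper does neither.
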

\subsubsection{Estimation of the variance}
It follows from equation (\ref{branchement2}) that
\[
\ep_{n+1}^{2}=\sigma^{2}X_{n}+b^{2}+V_{n+1},
\]
where $(V_{n})$ is a martingale difference sequence satisfying 
\[
\E\bigl[V_{n+1}^{2}|\CF_{n}\bigr]=2\sigma^{4}X_{n}^{2}+X_{n}(\tau^{4}-3\sigma^{4}+4b^{2}\sigma^{2})+\nu^{4}-b^{4},
\]
with
$$\tau^{4}=\E[(Y_{n,k}-m)^4] \hspace{0.5cm} \hbox{and} \hspace{0.5cm} \nu^{4}=\E[(I_n-\lambda)^4].$$ 
Consequenly, we infer that
\[\E\bigl[c_{n}^{-2}V_{n+1}^{2}|\CF_{n}\bigr]\leq
\tau^{4}+4b^{2}\sigma^{2}+\nu^{4}.\]
As before, we estimate the vector of variances $\eta^{t}=(\sigma^{2}, b^{2})$ by the following conditional least-squares estimator
\[
\widehat \eta_{n}= Q_{n}^{-1}\sum_{k=1}^{n}c_{k}^{-2}\Phi_{k}\widehat
\ep_{k+1}^{2} \quad \hbox{with} \quad \widehat \ep_{k+1}=X_{k+1}-\widehat \theta_{k}\Phi_{k} ,
\]
where
\[
Q_{n}= I_2+\sum_{k=0}^{n}c_{k}^{-2}\Phi_{k}\Phi_{k}^{t}.
\]
In the subcritical case $m<1$, it was established by Winnicki \cite{Winnicki}
that
\[\lim_{n \to \infty}\frac{1}{n}Q_{n}=\Lambda\]
where $\Lambda$ is the positive definite limiting matrix given by
\begin{displaymath}
\label{Lbrancvar}
\Lambda = \left(\begin{array}{cc}
        \E\Big[\frac{X^{2}}{(X+1)^{2}}\Bigl] &
        \E\Big[\frac{X}{(X+1)^{2}}\Bigl]\\
& \\
        \E\Big[\frac{X}{(X+1)^{2}}\Bigl]    & \E\Big[\frac{1}{(X+1)^{2}}\Bigl]\\
 \end{array}\right).
\end{displaymath}
Our last result is the following:
\begin{cor}
\label{erreureta}
Assume that $(\ep_{n})$ satisfies $(H_{p})$ for  $p\geq 2$.
Then
\begin{eqnarray}
\label{lfqpeta}
\lim_{n \to \infty} \frac{1}{\log
n}\sum_{k=1}^{n}k^{p-1}\left((\widehat{\eta}_{k}-
\eta)^{t}\Lambda(\widehat{\eta}_{k}-\eta)\right)^{p}= \ell(p) \quad
\mbox{a.s.}
\end{eqnarray}
\end{cor}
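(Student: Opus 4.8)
The plan is to recognise the variance estimation as one more instance of the general stochastic regression (\ref{modele}) and to deduce (\ref{lfqpeta}) directly from Corollary~\ref{avcv}, exactly as convergence~(\ref{lfqpbgwi}) was obtained for the mean. Squaring the noise in (\ref{branchement2}) yields the regression $\ep_{n+1}^{2}=\eta^{t}\Phi_{n}+V_{n+1}$, in which $\eta$ is the parameter, $\Phi_{n}^{t}=(X_{n},1)$ the regression vector, $V_{n+1}$ the driving noise, and $Q_{n}$ the associated design matrix. Normalising by $c_{n}$, the effective regressors are $c_{n}^{-1}\Phi_{n}$ while the effective noise $c_{n}^{-1}V_{n+1}$ has bounded conditional variance, and Winnicki's convergence $n^{-1}Q_{n}\to\Lambda$ plays the role of hypothesis~(\ref{cv}) with $\Lambda$ positive definite. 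Granting that this regression meets the assumptions of Theorem~\ref{thm2}, the conclusion~(\ref{res5}) of Corollary~\ref{avcv}, read with $(\eta,\Lambda,Q_{n})$ in place of $(\theta,L,S_{n})$, is precisely the desired statement.

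The point requiring work is that $\widehat\eta_{n}$ is built from the \emph{estimated} residuals $\widehat\ep_{k+1}=X_{k+1}-\widehat\theta_{k}^{t}\Phi_{k}$ rather than from the true $\ep_{k+1}$, so it is not literally the least squares estimator of the above regression. I would therefore split the estimation error. Writing $\widehat\ep_{k+1}=\ep_{k+1}+\pi_{k}$ with $\pi_{k}=(\theta-\widehat\theta_{k})^{t}\Phi_{k}$ as in~(\ref{pin}), one has $\widehat\ep_{k+1}^{2}=\ep_{k+1}^{2}+2\ep_{k+1}\pi_{k}+\pi_{k}^{2}$, whence
\[
\widehat\eta_{n}-\eta = Q_{n}^{-1}\bigl(\widetilde M_{n}-\eta\bigr)+Q_{n}^{-1}\sum_{k=1}^{n} c_{k}^{-2}\Phi_{k}\bigl(2\ep_{k+1}\pi_{k}+\pi_{k}^{2}\bigr),
\]
where $\widetilde M_{n}=\sum_{k=1}^{n}c_{k}^{-2}\Phi_{k}V_{k+1}$ is a genuine vector martingale transform. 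The first term is exactly the estimation error of the oracle regression driven by the true noise, to which I would apply Corollary~\ref{avcv} to produce the limit $\ell(p)$. It then remains to show that the correction $Q_{n}^{-1}\sum_{k}c_{k}^{-2}\Phi_{k}(2\ep_{k+1}\pi_{k}+\pi_{k}^{2})$ contributes nothing inside the normalised sum $\frac{1}{\log n}\sum_{k}k^{p-1}(\cdots)^{p}$. Here I would feed in the bounds already available for the mean problem: relation~(\ref{a}) gives $\pi_{k}^{2}\sim a_{k}(1)$, so that Corollary~\ref{corollairepouran} yields $\sum_{k}\pi_{k}^{2}=\CO(\log n)$ and $\sum_{k}\pi_{k}^{2p}=o(\log n)$, while the strong law of large numbers for martingales disposes of the mixed term in $\ep_{k+1}\pi_{k}$; the passage to the weighted sums $\sum_{k}k^{p-1}(\cdots)$ is handled by the same Abel-summation arguments as in the proof of Corollary~\ref{avcv}, and the cross terms between the oracle part and the correction are then killed by Cauchy--Schwarz.

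The main obstacle is twofold. First, this is a genuinely two-stage problem: the variance estimator plugs in the first-stage residuals, so one must establish that the mean-estimation error $\pi_{k}$ propagates only to lower order. This raises the moment bookkeeping by one level and is precisely why $p\ge 2$ is imposed, the squared noise $V_{n+1}$ calling for conditional moments of $\ep$ of order $>4p$, which must be supplied through $(H_{p})$ together with the moment assumptions on the offspring and immigration laws. Second, the normalised noise $c_{n}^{-1}V_{n+1}$ satisfies the homogeneity condition only asymptotically, since $c_{n}^{-2}\E[V_{n+1}^{2}|\CF_{n}]\to 2\sigma^{4}$ rather than being exactly constant, so one must check that the almost sure homogeneity used in Theorem~\ref{thm2} can be relaxed to this asymptotic form without altering the limit $\ell(p)$, exactly the mechanism already implicit in the derivation of~(\ref{lfqpbgwi}).
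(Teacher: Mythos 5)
Your plan is the route the paper intends: the paper offers no proof of Corollary~\ref{erreureta} at all --- it is stated as an immediate application of Corollary~\ref{avcv} to the regression $c_{n}^{-1}\ep_{n+1}^{2}=\eta^{t}\bigl(c_{n}^{-1}\Phi_{n}\bigr)+c_{n}^{-1}V_{n+1}$, with $(\eta,\Lambda,Q_{n})$ replacing $(\theta,L,S_{n})$ and Winnicki's convergence $n^{-1}Q_{n}\to\Lambda$ playing the role of hypothesis~(\ref{cv}). Your explicit treatment of the plug-in residuals ($\widehat\ep_{k+1}$ in place of $\ep_{k+1}$), via the splitting $\widehat\ep_{k+1}^{2}=\ep_{k+1}^{2}+2\ep_{k+1}\pi_{k}+\pi_{k}^{2}$, relation~(\ref{a}) and Corollary~\ref{corollairepouran}, is a step the paper passes over in silence and is genuinely needed; it is sound provided you keep track of the normalization, since what the first-stage problem actually controls is $\sum_{k}c_{k}^{-1}\pi_{k}^{2}=\CO(\log n)$, which is exactly what the weights $c_{k}^{-2}\Phi_{k}$ call for.

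There is, however, a genuine gap in your resolution of the second ``obstacle'', and the step as written would fail. You assert that $c_{n}^{-2}\E[V_{n+1}^{2}|\CF_{n}]\to 2\sigma^{4}$, so that homogeneity holds asymptotically; this would require $X_{n}\to\infty$, whereas in the subcritical case $m<1$ the chain $(X_{n})$ is ergodic with a nondegenerate stationary law --- this ergodicity is precisely why $n^{-1}Q_{n}$ converges to a matrix $\Lambda$ expressed through moments of the stationary variable $X$. Consequently $c_{n}^{-2}\E[V_{n+1}^{2}|\CF_{n}]$, a non-constant function of $X_{n}$, fluctuates over the support of the stationary distribution and converges to nothing; the same is true at the first stage, where $\E[\xi_{n+1}^{2}|\CF_{n}]=(\sigma^{2}X_{n}+b^{2})/(X_{n}+1)$ is bounded but never asymptotically constant, so there is no ``mechanism already implicit in the derivation of~(\ref{lfqpbgwi})'' for you to borrow: the paper never verifies the homogeneity hypothesis of Theorem~\ref{thm2} in the branching application either. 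Since homogeneity is exactly what identifies the increasing process with $\sigma^{2}S_{n-1}$ and produces the constant $\ell(p)$, this cannot be waved away; one needs either a version of Theorem~\ref{thm2} valid for a merely bounded, state-dependent conditional variance (in which case the limit need no longer be $\ell(p)$), or a different normalization, and your proposal supplies neither. A secondary discrepancy: applying Theorem~\ref{thm2} at exponent $p$ to the noise $c_{n}^{-1}V_{n+1}$, which behaves like $c_{n}^{-1}\ep_{n+1}^{2}$, requires conditional moments of $\ep$ of order $>4p$, i.e.\ a condition of type $(H_{2p})$ together with fourth-moment assumptions on the offspring and immigration laws, while the hypothesis $(H_{p})$ you invoke only provides order $a>2p$.
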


\bibliographystyle{acm}
\bibliography{Mart-6780}
\end{document}